\theoremstyle{plain}
\newtheorem{theorem}{Theorem}
\newtheorem{lemma}[theorem]{Lemma}
\newtheorem{proposition}[theorem]{Proposition}
\newtheorem*{claim*}{Claim}
\theoremstyle{definition}
\newtheorem{definition}[theorem]{Definition}
\newtheorem{remark}[theorem]{Remark}
\numberwithin{equation}{section}
\numberwithin{theorem}{section}
\newcommand{\fakeenv}{} 
\newenvironment{restate}[2]  
{ 
 \renewcommand{\fakeenv}{#2} 
 \theoremstyle{plain} 
 \newtheorem*{\fakeenv}{#1~\ref{#2}} 
 \begin{\fakeenv}
}
{
 \end{\fakeenv}
}
\newcommand{\HH}{\mathbb{H}} 
\newcommand{\NN}{\mathbb{N}} 
\newcommand{\RR}{\mathbb{R}}
\newcommand{\ZZ}{\mathbb{Z}} 
\newcommand{\calA}{\mathcal{A}}
\newcommand{\calB}{\mathcal{B}}
\newcommand{\calF}{\mathcal{F}}
\newcommand{\calH}{\mathcal{H}}
\newcommand{\abs}[1]{\left\lvert {#1} \right\rvert} 
\newcommand{\I}[1]{\langle #1 \rangle}
\newcommand{\bd}{\partial}
\newcommand{\from}{\colon\thinspace}
\newcommand{\GP}[2]{\left( #1 \, . \, #2 \right)}
\newcommand{\fF}{\mathcal{FF}}
\newcommand{\param}%
	{{\mathchoice{\mkern1mu\mbox{\raise2.2pt\hbox{$\centerdot$}}\mkern1mu}%
	{\mkern1mu\mbox{\raise2.2pt\hbox{$\centerdot$}}\mkern1mu}%
	{\mkern1.5mu\centerdot\mkern1.5mu}{\mkern1.5mu\centerdot\mkern1.5mu}}}
\DeclareMathOperator{\Aut}{Aut}
\DeclareMathOperator{\IA}{IA}
\DeclareMathOperator{\Isom}{Isom}
\DeclareMathOperator{\Out}{Out}
\DeclareMathOperator{\rank}{rk}
\DeclareMathOperator{\rerank}{\underline{rk}}
\begin{document}



\title[Simultaneous construction of hyperbolic isometries]{Simultaneous construction of hyperbolic isometries}

\author[M.~Clay]{Matt Clay}
\address{Dept.\ of Mathematical Sciences \\
  University of Arkansas\\
  Fayetteville, AR 72701}
\email{\href{mailto:mattclay@uark.edu}{mattclay@uark.edu}}

\author[C.~Uyanik]{Caglar Uyanik}
\address{Dept.\ of Mathematics \\
  Vanderbilt University\\
  Nashville, TN, 37240}
\email{\href{mailto:caglar.uyanik@vanderbilt.edu}{caglar.uyanik@vanderbilt.edu}}

\thanks{\tiny The first author is partially supported by the Simons Foundation (award number 316383). The second author is partially supported by the NSF grants of Ilya Kapovich (DMS-1405146) and Christopher J. Leininger (DMS-1510034) and gratefully acknowledges support from U.S. National Science Foundation grants DMS 1107452, 1107263, 1107367 ``RNMS: GEometric structures And Representation varieties'' (the GEAR Network)}

\begin{abstract}
Given isometric actions by a group $G$ on finitely many $\delta$--hyperbolic metric spaces, we provide a sufficient condition that guarantees the existence of a single element in $G$ that is hyperbolic for each action.  As an application we prove a conjecture of Handel and Mosher regarding relatively fully irreducible subgroups and elements in the outer automorphism group of a free group~\cite{HMIntro}.
\end{abstract}

\maketitle


\section{Introduction}

A \emph{$\delta$--hyperbolic space} is a geodesic metric space where geodesic triangles are \emph{$\delta$--slim}: the $\delta$--neighborhood of any two sides of a geodesic triangle contains the third side.  Such spaces were introduced by Gromov in~\cite{col:Gromov87} as a coarse notion of negative curvature for geodesic metric spaces and since then have evolved into an indispensable tool in geometric group theory.  

There is a classification of isometries of $\delta$--hyperbolic metric spaces analogous to the classification of isometries of hyperbolic space $\HH^{n}$ into elliptic, hyperbolic and parabolic.  Of these, hyperbolic isometries have the best dynamical properties and are often the most desired.  For example, typically they can be used to produce free subgroups in a group acting on a $\delta$--hyperbolic space~\cite[5.3B]{col:Gromov87}, see also~\cite[III.$\Gamma$.3.20]{bk:BH99}.  Another application is to show that a certain element does not have fixed points in its action on some set.  Indeed, if the set naturally sits inside of a $\delta$--hyperbolic metric space and the given element acts as a hyperbolic isometry then it has no fixed points (in a strong sense).  This strategy has been successfully employed for the curve complex of a surface and for the free factor complex of a free group by several authors~\cite{ar:CLM12,ar:CP12-1,DTHyp,ar:Fujiwara15,un:Gultepe, Hshort,ar:Mangahas13,ar:Taylor14}.   

We consider the situation of a group acting on finitely many $\delta$--hyperbolic spaces and produce a sufficient condition that guarantees the existence of a single element in the group that is a hyperbolic isometry for each of the spaces.  Of course, a necessary condition is that for each of the spaces there is some element of the group that is a hyperbolic isometry.  Thus we are concerned with when we may reverse the quantifiers: $\forall \exists \leadsto \exists\forall$.  Our main result is the following theorem.    

\begin{restate}{Theorem}{th:constructing hyperbolic actions}
Suppose that $\{ X_{i} \}_{i = 1,\ldots, n}$ is a collection of $\delta$--hyperbolic spaces, $G$ is a group and for each $i = 1,\ldots, n$ there is a homomorphism $\rho_{i} \from G \to \Isom(X_{i})$ such that:
\begin{enumerate}
\item there is an element $f_{i} \in G$ such that $\rho_{i}(f_{i})$ is hyperbolic; and
\item for each $g \in G$, either $\rho_{i}(g)$ has a periodic orbit or is hyperbolic.
\end{enumerate}
Then there is an $f \in G$ such that $\rho_{i}(f)$ is hyperbolic for all $i = 1,\ldots,n$.
\end{restate}

\begin{remark} After the completion of this paper we have been alerted that Theorem \ref{th:constructing hyperbolic actions} should follow from random walk techniques developed in \cite{BH10} and \cite{MT}. Here we provide an elementary and constructive proof. 
\end{remark}

Essentially, we assume that there are no parabolic isometries and that elliptic isometries are relatively tame.  

As an application of our main theorem we prove a conjecture of Handel and Mosher which exactly involves the same type of quantifier reversing: $\forall \exists \leadsto \exists\forall$.  Consider a finitely generated subgroup $\calH<\IA_{N}(\ZZ/3)<\Out(F_N)$ and a maximal $\calH$--invariant filtration of $F_{N}$, the free group of rank $N$, by free factor systems
\[\emptyset = \calF_{0} \sqsubset \calF_{1} \sqsubset \cdots \sqsubset \calF_{m} = \{[ F_{N}] \}\]
(see Section \ref{sec:application}). Handel and Mosher prove that for each multi-edge extension $\calF_{i-1} \sqsubset \calF_{i}$ there exists some $\varphi_{i} \in \calH$ that is irreducible with respect to $\calF_{i-1} \sqsubset \calF_{i}$~\cite[Theorem~D]{HMIntro}.  They conjecture that there exists a single $\varphi \in \calH$ that is irreducible with respect to each multi-edge extension $\calF_{i-1} \sqsubset \calF_{i}$.  We show that this is indeed the case.

\begin{restate}{Theorem}{th:application}
For each finitely generated subgroup $\calH < \IA_{N}(\ZZ/3)<\Out(F_N)$ and each maximal $\calH$--invariant filtration by free factor systems $\emptyset = \calF_{0} \sqsubset \calF_{1} \sqsubset \cdots \sqsubset \calF_{m} = \{[ F_{N}] \}$, there is an element $\varphi \in \calH$ such that for each $i = 1,\ldots,m$ such that $\calF_{i-1} \sqsubset \calF_{i}$ is a multi-edge extension, $\varphi$ is irreducible with respect to $\calF_{i-1} \sqsubset \calF_{i}$.
\end{restate}

Our paper is organized as follows.  Section~\ref{sec:spaces} contains background on $\delta$--hyperbolic spaces and their isometries.  In Section~\ref{sec:actions} we generalize a construction of the first author and Pettet from~\cite{ar:CP12-1} that is useful to constructing hyperbolic isometries.  This result is Theorem~\ref{th:uniform hyperbolic}.  We examine certain cases that will arise in the proof of the main theorem to see how to apply Theorem~\ref{th:uniform hyperbolic} in Section~\ref{sec:neighborhoods}.  The proof of Theorem~\ref{th:constructing hyperbolic actions} constitutes Section~\ref{sec:simultaneous}.  The application to $\Out(F_{N})$ appears in Section~\ref{sec:application}.

\subsection*{Acknowledgements.}  We would like to thank Lee Mosher and Camille Horbez for useful discussions. We are grateful to Camille Horbez for informing us about his work with Vincent Guirardel~\cite{un:GH}.  We thank the referee for a careful reading and for providing useful suggestions.  The second author thanks Ilya Kapovich and Chris Leininger for guidance and support.  


\section{Background on $\delta$--hyperbolic spaces}\label{sec:spaces}

In this section we recall basic notions and facts about $\delta$--hyperbolic spaces, their isometries and their boundaries. The reader familiar with these topics can safely skip this section with  the exception of Definition~\ref{def:independent}.  References for this section are \cite{col:AlonsoEtAl91}, \cite{bk:BH99} and \cite{col:KB02}.

\subsection{$\delta$--hyperbolic spaces}\label{subsec:spaces}

We recall the definition of a $\delta$--hyperbolic space given in the Introduction.

\begin{definition}\label{def:hyperbolic}
Let $(X,d)$ be a geodesic metric space.  A geodesic triangle with sides $\alpha$, $\beta$ and $\gamma$ is \emph{$\delta$--slim} if for each $x \in \alpha$, there is some $y \in \beta \cup \gamma$ such that $d(x,y) \leq \delta$.  The space $X$ is said to be \emph{$\delta$--hyperbolic} if every geodesic triangle is $\delta$--slim.  
\end{definition}

There are several equivalent definitions that we will use in the sequel.  The first of these is insize.  Let $\Delta$ be the geodesic triangle with vertices $x$, $y$ and $z$ and sides $\alpha$ from $y$ to $z$, $\beta$ from $z$ to $x$ and $\gamma$ from $x$ to $y$.  There exist unique points $\hat{\alpha} \in \alpha$, $\hat{\beta} \in \beta$ and $\hat{\gamma} \in \gamma$, called the \emph{internal points} of $\Delta$, such that:
\begin{equation*}
d(x,\hat{\beta}) = d(x,\hat{\gamma}), \,
d(y,\hat{\gamma}) = d(y,\hat{\alpha}) \text{ and }
d(z,\hat{\alpha}) = d(z,\hat{\beta}).
\end{equation*}
The \emph{insize} of $\Delta$ is the diameter of the set $\{ \hat{\alpha},\hat{\beta},\hat{\gamma}\}$.  
  
Another notion makes use of the so-called \emph{Gromov product}:
\begin{equation}\label{eq:gp}
\GP{x}{y}_{w} = \frac{1}{2}(d(x,w) + d(w,y) - d(x,y)).
\end{equation}
The Gromov product is said to be \emph{$\delta$--hyperbolic \textup{(}with respect to $w \in X$\textup{)}} if for all $x,y,z \in X$:
\[ \GP{x}{z}_{w} \geq \min\left\{ \GP{x}{y}_{w}, \GP{y}{z}_{w} \right\} - \delta. \]
 
\begin{proposition}[{\cite[Proposition~2.1]{col:AlonsoEtAl91}, \cite[III.H.1.17 and III.H.1.22]{bk:BH99}}]\label{prop:delta-hyp}
The following are equivalent for a geodesic metric space $X$:
\begin{enumerate}
\item There is a $\delta_{1} \geq 0$ such that every geodesic triangle in $X$ is $\delta_{1}$--slim, i.e., $X$ is $\delta_{1}$--hyperbolic.
\item There is a $\delta_{2} \geq 0$ such that every geodesic triangle in $X$ has insize at most $\delta_{2}$.
\item There is a $\delta_{3} \geq 0$ such that for some 
\textup{(}equivalently any\textup{)} $w \in X$, the Gromov product is $\delta_{3}$--hyperbolic.
\end{enumerate}
\end{proposition}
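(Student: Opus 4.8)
The plan is to close the cycle $(1)\Rightarrow(2)\Rightarrow(3)\Rightarrow(1)$. The first two implications are formal, and I would carry them out below; the third — a geodesic space satisfying the Gromov inequality at a basepoint has uniformly slim triangles — is classical, and I would quote it from the cited references rather than reprove it. Since the proof of $(2)\Rightarrow(3)$ below produces the Gromov inequality at an \emph{arbitrary} basepoint, once the cycle is closed the ``some (equivalently any) $w$'' clause in $(3)$ comes for free. Throughout I use the elementary dictionary between \eqref{eq:gp} and the internal points of a geodesic triangle: for a triangle with vertices $p,q,r$, the two internal points lying on the sides incident to $p$ are both at distance $\GP{q}{r}_p$ from $p$ (and cyclically), and — being two of the three internal points — the distance between them is at most the insize.

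\emph{$(1)\Rightarrow(2)$.} Assume every triangle is $\delta_1$--slim and fix a triangle with vertices $x,y,z$, sides $\alpha=[y,z]$, $\beta=[z,x]$, $\gamma=[x,y]$ and internal points $\hat{\alpha},\hat{\beta},\hat{\gamma}$. By $\delta_1$--slimness there is $u\in\alpha\cup\beta$ with $d(\hat{\gamma},u)\le\delta_1$. If $u\in\beta$, then $\abs{d(x,u)-d(x,\hat{\gamma})}\le\delta_1$; since $d(x,\hat{\gamma})=d(x,\hat{\beta})$ and $u,\hat{\beta}$ both lie on the geodesic $\beta$ with endpoint $x$, this forces $d(u,\hat{\beta})\le\delta_1$, hence $d(\hat{\gamma},\hat{\beta})\le2\delta_1$; if $u\in\alpha$, the same estimate based at $y$ gives $d(\hat{\gamma},\hat{\alpha})\le2\delta_1$. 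Thus $\hat{\gamma}$ is $2\delta_1$--close to one of $\hat{\alpha},\hat{\beta}$, and the identical argument applied to $\hat{\alpha}$ shows $\hat{\alpha}$ is $2\delta_1$--close to one of $\hat{\beta},\hat{\gamma}$; a single triangle inequality then bounds all three pairwise distances by $4\delta_1$, so the insize is at most $4\delta_1$.

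\emph{$(2)\Rightarrow(3)$.} Assume every triangle has insize at most $\delta_2$; fix points $x,y,z$, a basepoint $w$, and assume without loss that $\GP{x}{y}_w\le\GP{y}{z}_w$. The internal points of the triangle $wxy$ on the sides incident to $w$ give $A\in[w,y]$ and $A'\in[w,x]$ with $d(w,A)=d(w,A')=\GP{x}{y}_w$ and $d(A,A')\le\delta_2$; likewise the triangle $wyz$ gives $B\in[w,y]$ and $B'\in[w,z]$ with $d(w,B)=d(w,B')=\GP{y}{z}_w$ and $d(B,B')\le\delta_2$. Since $A,B\in[w,y]$ we have $d(A,B)=\GP{y}{z}_w-\GP{x}{y}_w$, so $d(A',B')\le\GP{y}{z}_w-\GP{x}{y}_w+2\delta_2$. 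Now look at the triangle $wA'B'$: by \eqref{eq:gp} its internal points on the sides incident to $w$ are at distance
\[ \GP{A'}{B'}_w \ =\ \tfrac12\bigl(d(w,A')+d(w,B')-d(A',B')\bigr) \ \ge\ \GP{x}{y}_w-\delta_2 \]
from $w$, one lying on $[w,A']\subseteq[w,x]$ and one on $[w,B']\subseteq[w,z]$, and these are at distance $\le\delta_2$ apart; call them $P$ and $Q$ and put $\ell=\GP{A'}{B'}_w$. Then $d(x,P)=d(w,x)-\ell$ and $d(z,Q)=d(w,z)-\ell$, so $d(x,z)\le d(x,P)+d(P,Q)+d(Q,z)\le d(w,x)+d(w,z)-2\ell+\delta_2$, whence
\[ \GP{x}{z}_w \ =\ \tfrac12\bigl(d(w,x)+d(w,z)-d(x,z)\bigr) \ \ge\ \ell-\tfrac12\delta_2 \ \ge\ \GP{x}{y}_w-\tfrac32\delta_2 \ =\ \min\{\GP{x}{y}_w,\GP{y}{z}_w\}-\tfrac32\delta_2 . \]
Hence $(3)$ holds, at every basepoint, with $\delta_3=\tfrac32\delta_2$.

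\emph{$(3)\Rightarrow(1)$, and the main obstacle.} This is the one step that is not essentially formal: from the purely metric Gromov inequality one must recover the geometric fact that geodesic triangles are slim. The standard route, which genuinely uses negative curvature, is to show that finite configurations of points embed with bounded distortion into metric trees (equivalently, that geodesic triangles are uniformly close to comparison tripods), from which slimness of triangles — and indeed of polygons — follows. I would quote this from \cite[Proposition~2.1]{col:AlonsoEtAl91} and \cite[III.H.1.17 and III.H.1.22]{bk:BH99} rather than reprove it. With it the cycle closes and all three conditions are equivalent; along the way one sees that $\delta_1,\delta_2,\delta_3$ differ only by fixed multiplicative factors, which is harmless since each clause asserts only the existence of some such constant. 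The main obstacle, then, is precisely $(3)\Rightarrow(1)$; everything else is short bookkeeping with the internal-point dictionary and \eqref{eq:gp}.
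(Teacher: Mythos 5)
The paper does not prove this proposition; it is stated with citations to \cite{col:AlonsoEtAl91} and \cite{bk:BH99} and treated as background, so there is no internal argument of the paper to compare against. Your plan --- prove the formal implications $(1)\Rightarrow(2)$ and $(2)\Rightarrow(3)$ directly and quote the substantive implication $(3)\Rightarrow(1)$ from the references --- is a sensible way to go beyond mere citation, and your $(2)\Rightarrow(3)$ argument is correct (modulo the implicit but harmless choice to take the side $[w,A']$ of the triangle $wA'B'$ to be the subsegment of $[w,x]$, and likewise for $[w,B']$, which one should say since geodesics need not be unique).

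There is, however, a small gap in your $(1)\Rightarrow(2)$ step. You invoke the slimness argument only for $\hat\gamma$ and then for $\hat\alpha$, which tells you that each of those two points is $2\delta_1$--close to \emph{some} other internal point. In the case where the first observation yields ``$\hat\gamma$ is close to $\hat\alpha$'' and the second yields ``$\hat\alpha$ is close to $\hat\gamma$,'' the two observations coincide and say nothing about $\hat\beta$, so the insize bound does not follow from what you have written. The fix is immediate: apply the same argument to all three internal points. Then every vertex of the resulting ``closeness graph'' on $\{\hat\alpha,\hat\beta,\hat\gamma\}$ has degree at least one, which on three vertices forces the graph to be connected with diameter at most two, giving the claimed $4\delta_1$ bound on all three pairwise distances and hence on the insize.
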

Henceforth, when we say $X$ is a $\delta$--hyperbolic space we assume that $\delta$ is large enough to satisfy each of the above conditions.
 
\subsection{Boundaries}\label{subsec:boundaries}

There is a useful notion of a boundary for a $\delta$--hyperbolic space that plays the role of the ``sphere at infinity'' for $\HH^{n}$.  This space is defined using equivalence classes of certain sequences of points in $X$ and the Gromov product.  Fix a basepoint $w \in X$.  

\begin{definition}\label{def:boundary}
We say a sequence $(x_{n}) \subseteq X$ \emph{converges to infinity} if $\GP{x_{i}}{x_{j}}_{w} \to \infty$ as $i,j \to \infty$.  Two such sequences $(x_{n})$, $(y_{n})$ are equivalent if $\GP{x_{i}}{y_{j}}_{w} \to \infty$ as $i,j \to \infty$.  The \emph{boundary of $X$}, denoted $\partial X$, is the set of equivalence classes of sequences $(x_{n}) \subseteq X$ that converge to infinity. 
\end{definition}

One can show that the notion of ``converges to infinity'' and the subsequent equivalence relation do not depend on the choice of basepoint $w \in X$~\cite{col:KB02}.
The definition of the Gromov product in~\eqref{eq:gp} extends to boundary points $\hat{x}, \hat{y} \in \partial X$ by:
\[ \GP{\hat{x}}{\hat{y}}_{w} = \inf \{ \liminf_{n} \GP{x_{n}}{y_{n}}_{w} \} \]
where the infimum is over sequences $(x_{n}) \in \hat{x}$, $(y_{n}) \in \hat{y}$.  If $y \in X$ then we set:
\[ \GP{\hat{x}}{y}_{w} = \inf \{ \liminf_{n} \GP{x_{n}}{y}_{w} \} \]
where the infimum is over sequences $(x_{n}) \in \hat{x}$.  For $x \in X$, the Gromov product $\GP{x}{\hat{y}}_{w}$ is defined analogously.  Let $\overline{X} = X \cup \partial X$.

We will make use of the following properties of the Gromov product on $\overline{X}$.

\begin{proposition}[{\cite[Lemma~4.6]{col:AlonsoEtAl91}, \cite[III.H.3.17]{bk:BH99}}]\label{prop:extended gp}
Let $X$ be a $\delta$--hyperbolic space.
\begin{enumerate}
\item If $x,y \in \overline{X}$ then $\GP{x}{y}_{w} = \infty \iff x = y \in \partial X$. 
\item If $\hat{x} \in \partial X$ and $(x_{n}) \subseteq X$ then $\GP{\hat{x}}{x_{n}}_{w} \to \infty$ as $n \to \infty \iff (x_{n}) \in \hat{x}$.
\item If $\hat{x},\hat{y} \in \partial X$ and $(x_{n}) \in \hat{x}$, $(y_{n}) \in \hat{y}$ then:
\[ \GP{\hat{x}}{\hat{y}}_{w} \leq \liminf_{n} \GP{x_{n}}{y_{n}}_{w} \leq \GP{\hat{x}}{\hat{y}}_{w} - 2\delta. \]
\item If $x,y,z \in \overline{X}$ then:
\[ \GP{x}{z}_{w} \geq \min\left\{ \GP{x}{y}_{w}, \GP{y}{z}_{w} \right\} - \delta. \]
\end{enumerate}
\end{proposition}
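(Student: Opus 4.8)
The plan is to bootstrap all four assertions from the \emph{interior} four-point condition of Proposition~\ref{prop:delta-hyp}(3) --- that $\GP{x}{z}_{w} \geq \min\{\GP{x}{y}_{w}, \GP{y}{z}_{w}\} - \delta$ for $x,y,z \in X$ --- together with the definitions of ``converges to infinity'', of the equivalence relation on such sequences, and of the extended Gromov product. The one combinatorial tool used throughout is the iterated four-point inequality
\[ \GP{a}{d}_{w} \geq \min\{\GP{a}{b}_{w}, \GP{b}{c}_{w}, \GP{c}{d}_{w}\} - 2\delta, \]
obtained by applying the hypothesis twice. In every step a point of $\partial X$ is replaced by a representative sequence converging to infinity and a point of $X$ by the corresponding constant sequence; one applies the interior inequalities to the sequences and passes to the limit, watching the infimum over representatives and the $\liminf$ over the index.

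I would establish the parts in the order (3), (1), (4), (2). For (3): the left inequality $\GP{\hat x}{\hat y}_{w} \leq \liminf_{n} \GP{x_{n}}{y_{n}}_{w}$ holds because the right-hand side is one of the quantities over which the defining infimum is taken; for the other inequality (which should read $\liminf_{n}\GP{x_{n}}{y_{n}}_{w} \leq \GP{\hat x}{\hat y}_{w} + 2\delta$) take competing representatives $(x_{n}'), (y_{n}')$, apply the iterated inequality to $x_{n}, x_{n}', y_{n}', y_{n}$, and use that $(x_{n}) \sim (x_{n}')$ and $(y_{n}) \sim (y_{n}')$ force $\GP{x_{n}}{x_{n}'}_{w} \to \infty$ and $\GP{y_{n}'}{y_{n}}_{w} \to \infty$, so for large $n$ the middle term realizes the minimum; then take $\liminf$ and the infimum over $(x_{n}'), (y_{n}')$. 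The same computation with $(x_{n}')$ a second representative of $\hat x$ and $(y_{n}') = (y_{n})$ shows that different representatives of a boundary point give $\liminf$'s within $\delta$ of each other, and I will also need that $\liminf_{n}\GP{x_{n}}{\hat y}_{w} \geq \GP{\hat x}{\hat y}_{w}$ for $(x_{n}) \in \hat x$, which follows by a diagonal choice of a sequence in $\hat y$ nearly realizing $\GP{x_{n}}{\hat y}_{w}$ for each $n$. For (1): if $x$ or $y$ lies in $X$ then $\GP{x}{y}_{w}$ is at most the distance from that point to $w$, hence finite; if $x = \hat x$ and $y = \hat y$ are distinct boundary points, then choosing representatives and applying the iterated inequality with intermediate points $x_{N}, y_{N}$ for suitably large fixed $N$ shows $\GP{x_{i}}{y_{j}}_{w} \to \infty$ as $i,j \to \infty$, i.e.\ $(x_{n}) \sim (y_{n})$, a contradiction; and $\GP{\hat x}{\hat x}_{w} = \infty$ because any two representatives of $\hat x$ are equivalent.

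For (4) I would induct on the number of points among $x,y,z$ lying in $\partial X$; the base case is the hypothesis. If $z \in \partial X$ and $x,y \in X$ (and symmetrically), apply the interior inequality to $x,y,z_{n}$ for a representative $(z_{n})$ of $\hat z$ and take $\liminf$ over $n$ --- valid because $\GP{x}{y}_{w}$ is constant in $n$, so the $\liminf$ passes through the minimum --- then take the infimum over $(z_{n})$, which likewise passes through the minimum since one argument is constant, giving $\GP{x}{\hat z}_{w} \geq \min\{\GP{x}{y}_{w}, \GP{y}{\hat z}_{w}\} - \delta$. With two or three boundary points one repeats this one representative at a time, using the case already proved and the sublemma $\liminf_{n}\GP{x_{n}}{\hat y}_{w} \geq \GP{\hat x}{\hat y}_{w}$; at each stage the ``frozen'' entries are constant in the index being pushed to the limit, which is exactly what lets the $\liminf$ and the infimum commute with the minimum. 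For (2): for ($\Leftarrow$), given $(x_{n}) \in \hat x$ and arbitrary $(y_{m}) \in \hat x$, the four-point condition on $y_{m}, x_{m}, x_{n}$ gives $\GP{y_{m}}{x_{n}}_{w} \geq \min\{\GP{y_{m}}{x_{m}}_{w}, \GP{x_{m}}{x_{n}}_{w}\} - \delta$; here $\GP{y_{m}}{x_{m}}_{w} \to \infty$ by equivalence and, for $n$ past the index supplied by ``$(x_{n})$ converges to infinity'', $\GP{x_{m}}{x_{n}}_{w}$ is large for all large $m$, uniformly in $(y_{m})$; taking $\liminf$ over $m$ and infimum over $(y_{m})$ yields $\GP{\hat x}{x_{n}}_{w} \to \infty$. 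For ($\Rightarrow$), if $\GP{\hat x}{x_{n}}_{w} \to \infty$ then part (4) applied to $x_{i}, \hat x, x_{j}$ shows $(x_{n})$ converges to infinity, and part (4) applied to $x_{i}, \hat x, y_{j}$ for a fixed $(y_{n}) \in \hat x$ --- with ($\Leftarrow$) supplying $\GP{\hat x}{y_{j}}_{w} \to \infty$ --- shows $(x_{n}) \sim (y_{n})$, i.e.\ $(x_{n}) \in \hat x$.

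The main obstacle is a single piece of bookkeeping: $\liminf_{n}\min\{a_{n},b_{n}\}$ can be strictly less than $\min\{\liminf_{n}a_{n},\liminf_{n}b_{n}\}$, so one cannot simply take $\liminf$ of the pointwise four-point condition when all three Gromov products vary with the index. This is what forces the one-boundary-point-at-a-time structure in the proof of (4), so that at each step the two non-varying entries really are constant in $n$, and what makes it necessary to prove the two-sided $2\delta$ estimate of (3) --- together with the representative-independence and $\liminf \geq \GP{\hat x}{\hat y}_{w}$ sublemmas --- before invoking it. Tracking the order of the quantifiers (infimum over representatives versus $\liminf$ over the index, with an occasional diagonal argument to assemble one representative from a family of near-optimal ones) and the accumulated multiples of $\delta$ is the only real work; the statements themselves are classical, as the citations indicate.
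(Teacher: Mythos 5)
The paper does not prove this proposition; it is cited from~\cite{col:AlonsoEtAl91} and~\cite{bk:BH99}, so there is no in-paper argument against which to compare your attempt. You are right that the statement as printed contains a typo --- the right-hand inequality in part~(3) should read $\liminf_{n}\GP{x_{n}}{y_{n}}_{w}\leq\GP{\hat{x}}{\hat{y}}_{w}+2\delta$ --- and your handling of parts (1) and (2) is essentially the standard argument.

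The ``main obstacle'' you describe, however, is not an obstacle, and the structure you build to route around it opens a genuine gap. For any two real sequences one always has the \emph{equality}
\[ \liminf_{n}\,\min\{a_{n},b_{n}\}=\min\{\liminf_{n}a_{n},\ \liminf_{n}b_{n}\}, \]
not merely a one-sided inequality: $\leq$ is immediate from $\min\{a_{n},b_{n}\}\leq a_{n}$ (and $\leq b_{n}$), while $\geq$ follows because for each $n$ the term $\min\{a_{n},b_{n}\}$ is either $a_{n}\geq\inf_{m\geq k}a_{m}$ or $b_{n}\geq\inf_{m\geq k}b_{m}$, hence $\geq\min\{\inf_{m\geq k}a_{m},\inf_{m\geq k}b_{m}\}$, and both of these partial infima are nondecreasing in $k$. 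The analogous identity $\inf_{\alpha,\beta}\min\{A_{\alpha},B_{\beta}\}=\min\{\inf_{\alpha}A_{\alpha},\inf_{\beta}B_{\beta}\}$ holds for the infimum over representatives. With these in hand part~(4) is direct: apply the interior four-point condition to representatives $(x_{n}),(y_{n}),(z_{n})$, take $\liminf_{n}$ (the minimum passes through), then take the infimum over $(x_{n})$ and $(z_{n})$ with $(y_{n})$ held fixed (the minimum passes through again); each of the two resulting partial infima is at least $\GP{\hat{x}}{\hat{y}}_{w}$, respectively $\GP{\hat{y}}{\hat{z}}_{w}$, because the extended Gromov product is an infimum over the \emph{larger} set of pairs in which $(y_{n})$ also varies. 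Your ``one boundary point at a time'' scheme instead produces a lower bound on $\liminf_{n}\GP{x_{n}}{\hat{z}}_{w}$ and then must transfer it to $\GP{\hat{x}}{\hat{z}}_{w}$; the sublemma you invoke for this --- $\liminf_{n}\GP{x_{n}}{\hat{y}}_{w}\geq\GP{\hat{x}}{\hat{y}}_{w}$ --- points the wrong way, lower-bounding the quantity you already control rather than the one you want, so that step does not close. There is also a directional slip in your argument for the nontrivial half of~(3): applying the chain inequality to $x_{n},x'_{n},y'_{n},y_{n}$ and taking $\liminf$ and the infimum over the primed representatives yields $\liminf_{n}\GP{x_{n}}{y_{n}}_{w}\geq\GP{\hat{x}}{\hat{y}}_{w}-2\delta$, a weakening of the \emph{easy} left-hand inequality; for the upper bound you must instead apply it to $x'_{n},x_{n},y_{n},y'_{n}$, lower-bounding $\GP{x'_{n}}{y'_{n}}_{w}$ by $\GP{x_{n}}{y_{n}}_{w}-2\delta$ for large $n$, and only then infimize over the primed pair. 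The symmetry of the set-up makes this easy to repair, but as written the inequality comes out backwards.
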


\begin{proposition}[{\cite[Proposition~4.8]{col:AlonsoEtAl91}}]\label{prop:basis}
The following collection of subsets of $\overline{X}$ forms a basis for a topology:
\begin{enumerate}
\item $B(x,r) = \{ y \in X \mid d(x,y) < r \}$, for each $x \in X$ and $r > 0$; and
\item $N(\hat{x},k) = \{ y \in \overline{X} \mid \GP{\hat{x}}{y}_{w} > k \}$ for each $\hat{x} \in \partial X$ and $k > 0$.
\end{enumerate}
\end{proposition}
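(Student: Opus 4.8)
To show this collection is a basis for a topology on $\overline{X}$, the plan is to verify the two defining conditions directly: that the listed sets cover $\overline{X}$, and that they satisfy the refinement property — for any two members $U_{1}, U_{2}$ of the collection and any $z \in U_{1} \cap U_{2}$, some member $U_{3}$ satisfies $z \in U_{3} \subseteq U_{1} \cap U_{2}$. Covering is immediate: each $x \in X$ lies in $B(x,1)$, and each $\hat{x} \in \partial X$ lies in $N(\hat{x},1)$ because $\GP{\hat{x}}{\hat{x}}_{w} = \infty$ by Proposition~\ref{prop:extended gp}(1). So all of the content is in the refinement property, which I would organize according to the type of the point $z$ and the types of $U_{1}$ and $U_{2}$.

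If $z \in X$, I would look for $U_{3}$ of the form $B(z,\varepsilon)$. When $U_{1} = B(x_{1},r_{1})$ and $U_{2} = B(x_{2},r_{2})$ this is the classical metric-space argument: take $\varepsilon = \min\{r_{1}-d(x_{1},z),\,r_{2}-d(x_{2},z)\}$ and invoke the triangle inequality. When one or both of $U_{1},U_{2}$ is a set $N(\hat{x}_{j},k_{j})$, the relevant input is the ``$1$--Lipschitz'' estimate $\GP{\hat{x}}{y}_{w} \geq \GP{\hat{x}}{z}_{w} - d(y,z)$, valid for all $y,z \in X$ and $\hat{x}\in\partial X$; it follows at once from the elementary bound $\abs{\GP{x'}{y}_{w} - \GP{x'}{z}_{w}} \leq d(y,z)$ for $x',y,z \in X$ together with the definition of $y \mapsto \GP{\hat{x}}{y}_{w}$ as an infimum of $\liminf$s. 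With this in hand, shrinking $\varepsilon$ further so that $\varepsilon < \GP{\hat{x}_{j}}{z}_{w} - k_{j}$ for the relevant indices $j$ guarantees $B(z,\varepsilon) \subseteq U_{1}\cap U_{2}$.

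The remaining case is $z \in \partial X$, in which $U_{1} = N(\hat{x}_{1},k_{1})$ and $U_{2} = N(\hat{x}_{2},k_{2})$ (these are the only members of the collection that can contain a boundary point), and I would produce $U_{3} = N(z,k_{3})$; note $z \in N(z,k_{3})$ for every $k_{3} > 0$. If $\hat{x}_{j} = z$ then $U_{j} = N(z,k_{j})$ is already of this form, so it suffices to treat the subcase $\hat{x}_{j}\neq z$, where $c_{j} := \GP{\hat{x}_{j}}{z}_{w}$ is finite, with $c_{j} > k_{j}$, by Proposition~\ref{prop:extended gp}(1). The key input is the $\delta$--hyperbolicity of the extended Gromov product, Proposition~\ref{prop:extended gp}(4): for $y \in \overline{X}$,
\[ \GP{\hat{x}_{j}}{y}_{w} \geq \min\bigl\{\GP{\hat{x}_{j}}{z}_{w},\,\GP{z}{y}_{w}\bigr\} - \delta . \]
Choosing $k_{3}$ large enough in terms of $\delta$ and the $c_{j}$, one checks that $\GP{z}{y}_{w} > k_{3}$ forces $\GP{\hat{x}_{j}}{y}_{w} > k_{j}$ for $j = 1,2$, hence $N(z,k_{3}) \subseteq U_{1}\cap U_{2}$.

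I expect this last case to be where the real care is needed, specifically in the additive $\delta$--bookkeeping: once $\GP{z}{y}_{w}$ exceeds $c_{j}$ the displayed inequality only yields $\GP{\hat{x}_{j}}{y}_{w} \geq c_{j} - \delta$, so one wants $c_{j}$ to exceed $k_{j}$ by more than $\delta$ and must argue (or arrange the conventions) so that this holds; the finiteness of $c_{j}$, which is the reason to isolate the subcase $\hat{x}_{j}\neq z$, is what makes a successful choice of $k_{3}$ possible. Everything else — the covering condition, the ball--ball refinement, and the refinements at points of $X$ — is routine once the $1$--Lipschitz estimate for the Gromov product has been recorded.
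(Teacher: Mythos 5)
You are right to flag the boundary-point case as the crux, but the worry you raise there is not a matter of ``arranging conventions'': the argument as proposed does not close. The constraint $c_{j}>k_{j}$ comes from the hypothesis $z\in N(\hat{x}_{j},k_{j})$ and is all you are given; there is no freedom to enlarge $c_{j}-k_{j}$, and increasing $k_{3}$ only pushes the minimum in the displayed inequality onto the $c_{j}$ branch, so the best you ever extract is $\GP{\hat{x}_{j}}{y}_{w}\geq c_{j}-\delta$. When $c_{j}\leq k_{j}+\delta$ this is strictly weaker than the needed $\GP{\hat{x}_{j}}{y}_{w}>k_{j}$, and there is no way to squeeze more out of Proposition~\ref{prop:extended gp}(4) alone: the extended Gromov product is only coarsely continuous, so for a general $\delta$--hyperbolic space the sets $N(\hat{x},k)$ need not be open in the topology they induce, and the literal basis refinement property between two boundary-centered sets can fail. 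Note also that the paper does not prove this statement but cites it from \cite{col:AlonsoEtAl91}, so there is no internal proof to compare against.

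The standard repair — and the one that matches how Proposition~\ref{prop:basis} is actually used later in the paper, for instance in Lemma~\ref{lem:uniform hyperbolic 2} where the only input is the existence of some $k$ with $N(A_{+},k)\subseteq U_{+}$ — is to regard the collection $\{B(x,r)\}\cup\{N(\hat{x},k)\}$ not as a basis in the strict sense but as a \emph{fundamental system of neighborhoods}: declare $U\subseteq\overline{X}$ open iff for every $p\in U$ there is some $r>0$ with $B(p,r)\subseteq U$ (when $p\in X$) or some $k>0$ with $N(p,k)\subseteq U$ (when $p\in\partial X$). This always defines a topology, requires no refinement comparison between neighborhoods centered at two different boundary points, and the parts of your write-up that are correct — the covering check, the $1$--Lipschitz estimate $\GP{\hat{x}}{y}_{w}\geq\GP{\hat{x}}{z}_{w}-d(y,z)$, and the ball--ball refinement — become exactly the verification that this system is well-behaved at points of $X$. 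The subtlety you correctly sensed in the last case is genuine, and the fix is a change of framing, not a cleverer choice of $k_{3}$.
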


\subsection{Isometries}\label{subsec:isometries}
 
As mentioned in the Introduction, there is a classification of isometries of a $\delta$--hyperbolic space $X$ into elliptic, parabolic and hyperbolic~\cite[8.1.B]{col:Gromov87}.  We will not make use of parabolic isometries and so do not give the definition here.

\begin{definition}\label{def:isometries}
An isometry $f \in \Isom(X)$ is \emph{elliptic} if for any $x \in X$, the set $\{ f^{n}x \mid n \in \ZZ \}$ has bounded diameter.  

An isometry $f \in \Isom(X)$ is \emph{hyperbolic} if for any $x \in X$ there is a $t > 0$ such that $t \abs{m-n} \leq d(f^{m}x,f^{n}x)$ for all $m,n \in \ZZ$.  In this case, one can show, the sequence $(f^{n}x) \subseteq X$ converges to infinity and the equivalence class it defines in $\partial X$ is independent of $x \in X$.  This point in $\partial X$ is called the \emph{attracting fixed point} of $f$.  The \emph{repelling fixed point} of $f$ is the attracting fixed point of $f^{-1}$ and is represented by the sequence $(f^{-n}x) \subseteq X$.
\end{definition}
 
The action of a hyperbolic isometry $f \in \Isom(X)$ on $\overline{X}$ has ``North-South dynamics.''

\begin{proposition}[{\cite[8.1.G]{col:Gromov87}}]\label{prop:ns} 
Suppose that $f \in \Isom(X)$ is a hyperbolic isometry and that $U_{+}, U_{-} \subset \overline{X}$ are disjoint neighborhoods of the attracting and repelling fixed points of $f$ respectively.  There exists an $N \geq 1$ such that for $n \geq N$:
\begin{equation*}
 f^{n}(\overline{X} - U_{-}) \subseteq U_{+} \text{ and }
 f^{-n}(\overline{X} - U_{+}) \subseteq U_{-}.
\end{equation*}
\end{proposition}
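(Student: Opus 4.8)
The plan is to establish a uniform estimate of the form $\GP{\xi_+}{f^n y}_w \to \infty$ as $n \to \infty$, uniformly over all $y$ outside a neighborhood of the repelling point, and then read off the inclusions from the description of the topology on $\overline{X}$ (Proposition~\ref{prop:basis}). Write $\xi_+$ and $\xi_-$ for the attracting and repelling fixed points of $f$, fix a basepoint $w \in X$, and put $x_m = f^m w$ for $m \in \ZZ$. By Definition~\ref{def:isometries}, the sequence $(x_m)_{m \geq 0}$ represents $\xi_+$ and $(x_{-m})_{m \geq 0}$ represents $\xi_-$, so Proposition~\ref{prop:extended gp}(2) gives $\GP{\xi_+}{x_m}_w \to \infty$ and $\GP{\xi_-}{x_{-m}}_w \to \infty$ as $m \to \infty$; and since $f$ is hyperbolic there is $t > 0$ with $d(w, x_{-m}) \geq tm$. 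Applying the statement to $f\inv$ interchanges $\xi_+ \leftrightarrow \xi_-$ and $U_+ \leftrightarrow U_-$, so it is enough to produce $N$ with $f^n(\overline{X} - U_-) \subseteq U_+$ for all $n \geq N$.

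Suppose no such $N$ exists. Then there are integers $n_j \to \infty$ and points $y_j \in \overline{X} - U_-$ with $z_j := f^{n_j} y_j \notin U_+$. Since $U_-$ is an open neighborhood of $\xi_-$ and the $y_j$ lie outside it, the quantities $\GP{\xi_-}{y_j}_w$ are bounded above, say by $k$: otherwise a subsequence would have $\GP{\xi_-}{y_j}_w \to \infty$ and hence $y_j \to \xi_-$ in $\overline{X}$ by Proposition~\ref{prop:basis}, contradicting $y_j \notin U_-$. For $j$ large enough that $\GP{\xi_-}{x_{-n_j}}_w > k + \delta$, Proposition~\ref{prop:extended gp}(4) applied to the triple $\xi_-, x_{-n_j}, y_j$ gives
\[
\min\bigl\{\GP{\xi_-}{x_{-n_j}}_w,\ \GP{x_{-n_j}}{y_j}_w\bigr\} \leq \GP{\xi_-}{y_j}_w + \delta \leq k + \delta,
\]
and as the first term of the minimum exceeds $k + \delta$ we conclude $\GP{x_{-n_j}}{y_j}_w \leq k + \delta$.

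Now transport this to an estimate on $\GP{\xi_+}{z_j}_w$. Directly from \eqref{eq:gp} one has the exact identity $\GP{a}{b}_c + \GP{c}{b}_a = d(a,c)$ for $a,b,c \in X$, and the definitions of the extended Gromov product together with Proposition~\ref{prop:extended gp} yield the inequality $\GP{a}{b}_c \geq d(a,c) - \GP{c}{b}_a - \delta$ for $a,c \in X$ and $b \in \overline{X}$. Since $f^{n_j}$ is an isometry and $w = f^{n_j} x_{-n_j}$,
\[
\GP{x_{n_j}}{z_j}_w = \GP{f^{n_j}w}{f^{n_j}y_j}_{f^{n_j}x_{-n_j}} = \GP{w}{y_j}_{x_{-n_j}} \geq d(w, x_{-n_j}) - \GP{x_{-n_j}}{y_j}_w - \delta \geq t n_j - k - 2\delta.
\]
Feeding this into Proposition~\ref{prop:extended gp}(4) applied to $\xi_+, x_{n_j}, z_j$ gives
\[
\GP{\xi_+}{z_j}_w \geq \min\bigl\{\GP{\xi_+}{x_{n_j}}_w,\ t n_j - k - 2\delta\bigr\} - \delta,
\]
whose right-hand side tends to $\infty$ because $\GP{\xi_+}{x_{n_j}}_w \to \infty$. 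Hence $z_j \to \xi_+$ in $\overline{X}$, contradicting $z_j \notin U_+$; this contradiction produces the desired $N$, and, combining with the $f\inv$ case by taking the larger of the two bounds, completes the proof.

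The main obstacle is the uniformity in $y$. The naive route of splitting $\GP{\xi_+}{f^n y}_w$ through the orbit point $x_n$ produces, after using equivariance, a Gromov product based at the far-away point $x_{-n}$, and there the term $d(w,y)$ is unbounded over $y \in \overline{X} - U_-$. The identity $\GP{w}{y}_{x_{-n}} = d(w, x_{-n}) - \GP{x_{-n}}{y}_w$ is exactly what converts this apparent liability into an asset: $\GP{x_{-n}}{y}_w$ stays bounded precisely because $y$ avoids the neighborhood $U_-$ of $\xi_-$, which is where disjointness of $U_+$ and $U_-$ enters. The remaining inputs — that staying outside a neighborhood of a boundary point keeps the corresponding Gromov product bounded, and that $\GP{\xi_+}{z_j}_w \to \infty$ forces $z_j \to \xi_+$ — are standard consequences of Propositions~\ref{prop:basis} and~\ref{prop:extended gp} describing the topology of $\overline{X}$.
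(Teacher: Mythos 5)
The paper gives no proof of this proposition; it is cited to Gromov (and in the literature it usually appears as a standard consequence of the boundary topology), so there is no in-paper argument to compare against. Your proof is correct and self-contained, and it is essentially the standard route: bound $\GP{\xi_-}{y}_w$ for $y$ outside $U_-$, transport via the equivariance $\GP{x_{n}}{f^{n}y}_w = \GP{w}{y}_{x_{-n}}$ and the identity $\GP{a}{b}_c + \GP{c}{b}_a = d(a,c)$ to get a lower bound on $\GP{x_{n}}{f^{n}y}_w$ growing linearly in $n$, then concatenate with $\GP{\xi_+}{x_{n}}_w \to \infty$ via the four-point inequality.

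One small quantitative slip worth flagging: the inequality you extract from the extended Gromov product, $\GP{a}{b}_c \geq d(a,c) - \GP{c}{b}_a - \delta$ for $a,c \in X$, $b \in \overline{X}$, should carry a larger constant. The exact identity $\GP{a}{b}_c + \GP{c}{b}_a = d(a,c)$ holds only for $b \in X$; when $b \in \partial X$ you must pass through sequences, and the defining $\inf\liminf$ together with Proposition~\ref{prop:extended gp}(3) (whose upper bound should read $\GP{\hat{x}}{\hat{y}}_w + 2\delta$ — the printed minus sign is a typo) converts the $\limsup \GP{c}{b_n}_a$ you pick up into $\GP{c}{\hat{b}}_a + 2\delta$, so the error term is $2\delta$ (or up to $4\delta$ depending on bookkeeping), not $\delta$. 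This only inflates the constant in your estimate $\GP{x_{n_j}}{z_j}_w \geq t n_j - k - C$ and does not affect the conclusion. The rest — the argument that $\GP{\xi_-}{y_j}_w$ is bounded because $U_-$ contains a basic neighborhood $N(\xi_-,k)$, the equivariance of the Gromov product under isometries extended to $\overline{X}$, and the deduction $\GP{\xi_+}{z_j}_w \to \infty \Rightarrow z_j \to \xi_+$ — is all sound.
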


We will make use of the following definition.

\begin{definition}\label{def:independent}
Suppose $X$ is a $\delta$--hyperbolic space and $f,g \in \Isom(X)$ are hyperbolic isometries.  Let $A_{+}$, $A_{-}$ be the attracting and repelling fixed points of $f$ in $\partial X$ and let $B_{+}$, $B_{-}$ be the attracting and repelling fixed points of $g$ in $\partial X$.  We say $f$ and $g$ are \emph{independent} if:
\[ \{A_{+},A_{-}\} \cap \{B_{+},B_{-}\} = \emptyset. \]
Hyperbolic isometries that are not independent are said to be \emph{dependent}.
\end{definition}


\section{A recipe for hyperbolic isometries}\label{sec:actions}

In this section we prove the principal tool used in the proof of the main result of this article, producing a single element in the given group that is hyperbolic for each action.  The idea is to start with elements $f$ and $g$ that are hyperbolic for different actions and then combine them into a single element $f^{a}g^{b}$ that is hyperbolic for both actions.  A theorem of the first author and Pettet shows that if $g$ does not send the attracting fixed point of $f$ to the repelling fixed point, then $f^{a}g$ is hyperbolic in the first action for large enough $a$.  We can reverse the roles to get that $fg^{b}$ is hyperbolic in the second action for large enough $b$.  In order to simultaneously work with powers for both $f$ and $g$, we need a uniform version of this result.  That is the content of the next theorem, which generalizes Theorem~4.1 in~\cite{ar:CP12-1}.

\begin{theorem}\label{th:uniform hyperbolic}
Suppose $X$ is a $\delta$--hyperbolic space and $f \in \Isom(X)$ is a hyperbolic isometry with attracting and repelling fixed points $A_{+}$ and $A_{-}$ respectively.  Fix disjoint neighborhoods $U_{+}$ and $U_{-}$ in $\overline{X}$ for $A_{+}$ and $A_{-}$ respectively.  Then there is an $M \geq 1$ such that if $m \geq M$ and $g \in \Isom(X)$  then $f^{m}g$ is a hyperbolic isometry whenever $g U_{+} \cap U_{-} = \emptyset$.   
\end{theorem}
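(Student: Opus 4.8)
The strategy is to find, for the fixed neighborhoods $U_\pm$, a single threshold $M$ that works simultaneously for all $g$ satisfying $gU_+ \cap U_- = \emptyset$. The point of the uniform statement is that the ``bad'' data we need to control — how $g$ interacts with $f$'s dynamics — is confined to a compact-like condition on $\overline X$, so we can extract a uniform constant. Concretely, I would fix a basepoint $w$ and work with the Gromov-product description of the neighborhoods $N(A_\pm, k)$ from Proposition~\ref{prop:basis}. Choose $k_0 > 0$ large enough that $N(A_+, k_0) \subseteq U_+$ and $N(A_-, k_0) \subseteq U_-$; shrinking $U_\pm$ only makes the hypothesis $gU_+\cap U_- = \emptyset$ harder to satisfy, so it suffices to prove the theorem for these smaller neighborhoods.

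First I would recall the mechanism behind Theorem~4.1 of \cite{ar:CP12-1}: to show $f^m g$ is hyperbolic one produces two points $\xi_+, \xi_- \in \overline X$ with disjoint neighborhoods that are mapped ``across'' by $f^m g$ in the North--South sense, and then invokes a ping-pong / Gromov-product criterion to conclude $f^m g$ has positive translation length. The element $g$ enters only through the point $g A_+$ (the image under $g$ of $f$'s attracting point) relative to $f$'s repelling point $A_-$. The hypothesis $gU_+ \cap U_- = \emptyset$ forces $gA_+ \notin U_-$, equivalently $\GP{gA_+}{A_-}_w \le k_0$. The key observation is that this single inequality is all the information about $g$ that the argument uses, and it is a \emph{uniform} bound — independent of which $g$ we picked. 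So I would rerun the proof of \cite[Theorem~4.1]{ar:CP12-1} keeping track of constants, and verify that the resulting lower bound on the required power of $f$ depends only on $\delta$, on $f$ (via its translation length and the position of $A_\pm$ relative to $w$), and on $k_0$ — but not on $g$. Setting $M$ to be that bound (rounded up to an integer $\ge 1$) finishes the proof.

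In more detail, the steps would be: (i) translate $gU_+\cap U_- = \emptyset$ into the numerical inequality $\GP{gA_+}{A_-}_w \le k_0$ using Proposition~\ref{prop:extended gp}(4) and the basis description; (ii) using Proposition~\ref{prop:ns} (North--South dynamics for $f$) together with the slimness/insize inequalities, show that for $m$ large, $f^m$ maps everything outside $U_-$ deep into $U_+$ with a quantitative estimate $\GP{f^m x}{A_+}_w$ growing linearly in $m$ with slope bounded below by (a fraction of) the translation length of $f$; (iii) combine with (i) to show $f^m g$ maps a neighborhood of a suitable attracting candidate $\xi_+$ strictly inside itself and analogously for $f^{-1}g^{-1}f^{-m}$-type considerations, getting a genuine attracting/repelling pair; (iv) apply the standard criterion that an isometry with this ping-pong behavior and uniformly expanding Gromov products is hyperbolic (as in \cite{col:Gromov87}, \cite{bk:BH99}). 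The only $g$-dependent quantity throughout is the constant $k_0$, which was fixed in advance, so the threshold $M$ is uniform.

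The main obstacle, I expect, is step (ii)--(iii): making the North--South dynamics of $f^m$ genuinely \emph{quantitative and uniform over the target point}, since Proposition~\ref{prop:ns} as stated only gives the existence of $N$ for a \emph{given} pair of neighborhoods, not an estimate on how fast $f^m x$ penetrates $U_+$ as a function of $\GP{x}{A_-}_w$ and $m$. Resolving this requires going back to the definition of hyperbolic isometry (the linear lower bound $t|m-n| \le d(f^m x, f^n x)$) and the $\delta$--inequality for the Gromov product on $\overline X$ (Proposition~\ref{prop:extended gp}(4)) to derive an inequality roughly of the form $\GP{f^m x}{A_+}_w \ge \tfrac{t}{2} m - C$ whenever $\GP{x}{A_-}_w \le k_0$, with $C = C(\delta, f, k_0)$. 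Once that uniform estimate is in hand, choosing $M$ large enough that $\tfrac{t}{2}M - C$ exceeds the ping-pong threshold is immediate, and the rest is the same bookkeeping as in \cite{ar:CP12-1}.
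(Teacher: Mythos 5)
Your meta-strategy is the right one and matches the paper's: go back to the proof of \cite[Theorem~4.1]{ar:CP12-1}, observe that the only input from $g$ is a Gromov-product bound that is fixed in advance by the neighborhoods $U_{\pm}$, and conclude the threshold on $m$ is uniform over all admissible $g$. You also correctly identify the real difficulty, namely making the North--South estimate for $f^{m}$ quantitative rather than merely qualitative.

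However, your description of the underlying \emph{mechanism} does not match what is actually done, and the discrepancy matters for whether the sketch would compile into a proof. You frame the argument as boundary ping-pong: produce $\xi_{\pm} \in \partial X$ with disjoint neighborhoods mapped across by $f^{m}g$, then invoke a criterion that such dynamics plus expanding Gromov products gives hyperbolicity. The argument in the paper (and in \cite{ar:CP12-1}) never leaves $X$. Lemma~\ref{lem:uniform hyperbolic 1} produces the linear lower bound $d(x,f^{m}gx)\geq mt-C$ via nearest-point projection onto the orbit $\{f^{n}x\}$ and the Morse lemma, not via Gromov products toward $A_{+}$. Lemma~\ref{lem:uniform hyperbolic 2} bounds $\GP{g^{-1}f^{-m}x}{f^{m}gx}_{x}$ in order to show the concatenation $\alpha_{m}\cdot f^{m}g\alpha_{m}$ is a $(1,\epsilon)$--quasi-geodesic. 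The theorem then follows by tiling together translates $(f^{m}g)^{k}\beta_{m}$ into a biinfinite local quasi-geodesic and invoking the local-to-global stability of quasi-geodesics, which gives $d(x,(f^{m}g)^{n}x)$ growing linearly in $n$. This sidesteps any question of whether a ``contracting neighborhood'' on $\partial X$ actually contains a fixed point — a genuine concern in non-proper hyperbolic spaces (which include the relative free factor complexes the theorem is applied to), where Brouwer-type arguments are unavailable. Your step (iv) leans on exactly this, and the standard sources you cite give the quasi-geodesic criterion, not a boundary fixed-point one, so you would in any case be forced back to the paper's route.

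A second, smaller, inaccuracy: you claim the only $g$-dependent data is $\GP{gA_{+}}{A_{-}}_{w}\leq k_{0}$, i.e.\ $gA_{+}\notin U_{-}$. In fact the hypothesis $gU_{+}\cap U_{-}=\emptyset$ is applied in two separate places to \emph{orbit points}, not just to $A_{+}$: Lemma~\ref{lem:uniform hyperbolic 1} uses $gx\notin U_{-}$ for the fixed basepoint $x\in U_{+}\cap X$, and Lemma~\ref{lem:uniform hyperbolic 2} uses $g^{-1}f^{-m}x\notin U_{+}$ (which follows from $U_{+}\cap g^{-1}U_{-}=\emptyset$ once $f^{-m}x\in U_{-}$). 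Both are uniform consequences of the hypothesis, so your conclusion (uniform $M$) still holds, but the way you stated it would lead you to an argument that only controls where one boundary point goes, which is not enough to run either lemma.
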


The proof follows along the lines of Theorem~4.1 in \cite{ar:CP12-1}.  In the following two lemmas we assume the hypotheses of Theorem~\ref{th:uniform hyperbolic}.  The first lemma is obvious in the hypothesis of Theorem~4.1 in \cite{ar:CP12-1} but requires a proof in this setting.

\begin{lemma}\label{lem:uniform hyperbolic 1}
Given a point $x \in U_{+} \cap X$ there are constants $t >0 $ and $C \geq 0$ such that if $g \in \Isom(X)$ is such that $g U_{+} \cap U_{-} = \emptyset$ then $d(x,f^{m}gx) \geq mt - C$ for all $m \geq 0$.
\end{lemma}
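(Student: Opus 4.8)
The plan is to get a lower bound on $d(x, f^m g x)$ by decomposing the geodesic from $x$ to $f^m g x$ and tracking how the Gromov product behaves. The key point is that $x \in U_+$ while $gx$ (and hence everything ``beyond'' $gx$ from the perspective of $f$-iteration) is far from the repelling direction of $f$, since $g U_+ \cap U_- = \emptyset$ forces $gx \notin U_-$.

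First I would fix the basepoint $w = x$ and record the relevant Gromov product estimates. Since $x \in U_+ \cap X$, Proposition~\ref{prop:basis} gives that $\GP{A_+}{x}_w > k_+$ for the constant $k_+$ defining $U_+$; similarly $gx \notin U_-$ gives a uniform upper bound on $\GP{A_-}{gx}_w$, say $\GP{A_-}{gx}_w \le k_-$, \emph{independent of $g$} — this is where the hypothesis $gU_+ \cap U_- = \emptyset$ is used (more precisely, $gx \in gU_+$, so $gx \notin U_-$). Next, because $f$ is hyperbolic, the sequence $(f^n x)$ converges to infinity and represents $A_+$, so by Proposition~\ref{prop:extended gp}(2) we have $\GP{A_+}{f^m x}_w \to \infty$; in fact hyperbolicity of $f$ gives a linear lower bound $\GP{A_+}{f^m x}_w \ge mt_0 - C_0$ for suitable $t_0 > 0$, $C_0 \ge 0$ (using $d(x, f^m x) \ge t_0 m$ and the definition of the Gromov product relative to $A_+$, together with $\delta$-thinness).

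The main step is then a triangle-style estimate for $d(x, f^m g x)$. I would use $\delta$-hyperbolicity of the extended Gromov product (Proposition~\ref{prop:extended gp}(4)) on the triple $A_+$, $f^m g x$, and then relate $\GP{A_+}{f^m g x}_w$ to $\GP{A_+}{f^m x}_w$ via the fact that $f^m$ moves $gx$ close to $A_+$ when $m$ is large and $gx \notin U_-$: by North–South dynamics (Proposition~\ref{prop:ns}), applied once to fixed neighborhoods, $f^m(\overline{X} - U_-) \subseteq U_+$ for $m \ge N$, so $f^m g x \in U_+$, giving $\GP{A_+}{f^m g x}_w > k_+$ uniformly for $m \ge N$. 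Actually to get a \emph{linear} bound in $m$ I would instead estimate $\GP{A_+}{f^m g x}_w$ directly: writing $y = gx$, since $\GP{A_-}{y}_w \le k_-$ one shows (via the $\delta$-inequality and the fact that $(f^{-n}x)$ represents $A_-$) that $\GP{x}{y}_w$ is bounded above by $k_- + \delta$ or so, hence $d(x,y) \ge d(x, f^{-m}\cdot) $-type reasoning is not needed — rather $\GP{f^{-m}x}{y}_w$ stays bounded, so applying $f^m$ (an isometry) and translating basepoints, $\GP{x}{f^m y}_w \ge mt_0 - C_1$ for a uniform $C_1$. Finally $d(x, f^m g x) = d(x,x) + \ldots$; concretely, from $\GP{x}{f^m g x}_w = \frac12(d(x,w) + d(w, f^m g x) - d(x, f^m g x))$ with $w = x$, we get $d(x, f^m g x) \ge 2\GP{\text{something}}{\cdot}$; the clean way is $d(x, f^m gx) \ge d(x, f^m x) - d(f^m x, f^m g x) = d(x,f^m x) - d(x, gx)$, but $d(x,gx)$ is not uniformly bounded, so this crude bound fails — hence the need for the Gromov-product argument, which yields $d(x, f^m g x) \ge 2(mt_0 - C_1) = mt - C$ after relabeling $t = 2t_0$, $C = 2C_1$.

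\textbf{The main obstacle.} The delicate part is extracting a \emph{uniform} constant $C$ that does not depend on $g$, despite $d(x, gx)$ being unbounded as $g$ ranges over all isometries with $gU_+ \cap U_- = \emptyset$. This is exactly why one cannot argue by naive triangle inequalities and must instead use that $gx \notin U_-$ translates into a \emph{bounded} Gromov product $\GP{A_-}{gx}_w$, and that this bound is inherited (up to additive $\delta$-errors and the isometry $f^m$) by the configuration $x, f^m g x, A_+$. Tracking the $\delta$'s carefully through Proposition~\ref{prop:extended gp}(3)–(4) to land at a genuinely $g$-independent $C$ is the crux; everything else is the standard hyperbolic-isometry bookkeeping already present in \cite{ar:CP12-1}.
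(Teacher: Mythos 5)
Your approach is genuinely different from the paper's. The paper works with nearest-point projections onto the quasi-geodesic orbit $A = \{f^{n}x\}$, the Morse lemma, and an insize bound for geodesic triangles, organized around a coarse reverse triangle inequality through the projection point (its Claim~1) together with a lower bound on how far down the orbit the projection of $gx$ can land (its Claim~2). You instead argue purely with Gromov products. Your central observation --- that $gx \in gU_{+}$ forces $gx \notin U_{-}$, hence a \emph{$g$-independent} bound $\GP{A_{-}}{gx}_{x} \le k$, which combined with $\GP{A_{-}}{f^{-m}x}_{x} \to \infty$ and Proposition~\ref{prop:extended gp}(4) yields a $g$- and $m$-independent bound $\GP{f^{-m}x}{gx}_{x} \le C_{1}$ for all $m$ past some threshold --- is correct, and it is the heart of a valid (and arguably shorter) proof.

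That said, several of your intermediate assertions do not hold up as written, because you evaluate Gromov products at the basepoint $w = x$ with $x$ itself as one of the entries: $\GP{A_{+}}{x}_{x}$ and $\GP{x}{y}_{x}$ are identically zero, so the claim $\GP{A_{+}}{x}_{w} > k_{+}$ is false, the claim $\GP{x}{y}_{w} \le k_{-} + \delta$ is vacuous, and ``translating basepoints'' cannot produce $\GP{x}{f^{m}y}_{w} \ge mt_{0} - C_{1}$ when that quantity is $0$. None of these are actually needed. To finish from your key bound, note that since $f^{m}$ is an isometry, $\GP{f^{-m}x}{gx}_{x} = \GP{x}{f^{m}gx}_{f^{m}x}$, and unwinding the definition gives
\[
d(x, f^{m}gx) \;=\; d(x, f^{m}x) + d(f^{m}x, f^{m}gx) - 2\GP{x}{f^{m}gx}_{f^{m}x} \;\ge\; d(x,f^{m}x) - 2C_{1} \;\ge\; \tfrac{m}{\tau} - 2C_{1},
\]
with no doubling of the $m$ term as in your final line. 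Enlarging $C$ to absorb the finitely many small $m$ for which the Gromov product bound has not yet kicked in completes the argument.
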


\begin{proof}
Let $A = \{ f^{n}x | n \in \ZZ\}$ and for $z \in X$ let \[d_{z} = \inf\{ d(x',z) \mid x' \in A\}.\]  As $f$ is a hyperbolic isometry, there is a constant $\tau \geq 1$ such that:
\[ \frac{1}{\tau}\abs{m-n} \leq d(f^{m}x,f^{n}x) \leq \tau\abs{m-n}. \]  This shows that for any $z \in X$ the set $\pi_{z} = \{ x' \in A \mid d(x',z) = d_{z} \}$ is nonempty and finite.   

\medskip \noindent {\bf Claim 1:} {\it There is a constant $D \geq 0$ such that for any $z \in X$ and $x_{z} \in \pi_{z}$:}
\[ d(x,z) \geq d(x,x_{z}) + d(x_{z},z) - D. \]

\begin{proof}[Proof of Claim 1]  
Fix a point $x_{z} \in \pi_{z}$ and geodesics $\alpha$ from $x_{z}$ to $x$, $\beta$ from $z$ to $x_{z}$ and $\gamma$ from $z$ to $x$.  Let $\Delta$ be the geodesic triangle formed with these segments and $\hat{\alpha} \in \alpha$, $\hat{\beta} \in \beta$ and $\hat{\gamma} \in \gamma$ be the internal points of $\Delta$.  These points satisfy the equalities:
\begin{align*}
d(z,\hat{\beta}) = d(z,\hat{\gamma}) &= a \\
d(x,\hat{\gamma}) = d(x,\hat{\alpha}) & = b \\
d(x_{z},\hat{\alpha}) = d(x_{z},\hat{\beta}) & = c 
\end{align*}
As insize of geodesic triangles is bounded by $\delta$ in a $\delta$--hyperbolic space, we have that $d(\hat{\alpha},\hat{\beta}), d(\hat{\beta},\hat{\gamma}), d(\hat{\gamma},\hat{\alpha}) \leq \delta$.  By the Morse lemma~\cite[III.H.1.7]{bk:BH99}, there is a constant $R$, only depending on $\tau$ and $\delta$, and a point $y \in A$ such that $d(\hat{\alpha},y) \leq R$.  Thus we have that:
\[ d(z,y) \leq d(z,\hat{\beta}) + d(\hat{\beta},\hat{\alpha}) + d(\hat{\alpha},y) \leq a + \delta + R.\]
As $x_{z} \in \pi_{z}$ we have:
\[a + c = d(x_{z},z) \leq d(z,y) \leq a + \delta + R\] 
and so $c \leq \delta + R$.  Letting $D = 2\delta + 2R$ we compute: 
\begin{align*}
d(x,z) & = a + b \\
&= (b+c) + (a+c) - 2c \\
&\geq d(x,x_{z}) + d(x_{z},z) - D.\qedhere
\end{align*}
\end{proof}

\medskip \noindent {\bf Claim 2:} {\it There is a constant $M_{0} \in \ZZ$ such that if $z \notin U_{-}$ and $f^{m}x \in \pi_{z}$ then $m \geq M_{0}$.}

\begin{proof}[Proof of Claim 2]
Let $x_{z} = f^{m}x \in \pi_{z}$ and without loss of generality assume that $m \leq 0$.  Using the constant $D$ from Claim 1 we have:
\begin{align*}
\GP{x_{z}}{z}_{x} & = \frac{1}{2}\left(d(x,x_{z}) + d(x,z) - d(x_{z},z)\right) \\
& \geq d(x,x_{z}) - D/2.
\end{align*}
Suppose that $i \leq m$ and let $\alpha$ be a geodesic from $f^{i}x$ to $x$.  The Morse lemma implies that there is an $y \in \alpha$ such that $d(x_{z},y) \leq R$.  Therefore:
\begin{align*}
d(x,x_{z}) + d(x_{z},f^{i}x) &\leq d(x,y) + d(y,f^{i}x) + 2R \\
&= d(x,f^{i}x) + 2R.
\end{align*} 
Hence for such $i$ we have:
\begin{align*}
\GP{x_{z}}{f^{i}x}_{x} &= \frac{1}{2}\left(d(x,x_{z}) + d(x,f^{i}x) - d(x_{z},f^{i}x)\right) \\
& \geq d(x,x_{z}) - R.
\end{align*} 
This shows that $\GP{x_{z}}{A_{-}}_{x} \geq d(x,x_{z}) - R - 2\delta$ and so for $K = \max\{ D/2,R + 2\delta \}$ we have:
\[ \GP{z}{A_{-}}_{x} \geq \min \left\{ \GP{x_{z}}{z}_{x},\GP{x_{z}}{A_{-}}_{x} \right\} - \delta  \geq d(x,x_{z}) - K - \delta \]
As $z \notin U_{-}$, the Gromov product $\GP{z}{A_{-}}_{x}$ is bounded independently of $z$ and hence $d(x,x_{z})$ is also bounded.
\end{proof}

Now we will finish the proof of the lemma.  Fix a point $x_{g} \in \pi_{gx}$.  Clearly we have $f^{m}x_{g} \in \pi_{f^{m}gx}$ for $m \geq 0$.  As $gx \notin U_{-}$, by Claim 2 we have $x_{g} = f^{M_{0}+n}x$ for some $n \geq 0$ and therefore:
\begin{align*}
d(x,f^{m}x_{g}) = d(x,f^{M_{0}+n+m}x) &\geq d(x,f^{m+n}x) - d(x,f^{M_{0}}x) \\
&\geq \frac{1}{\tau}m - \tau \abs{M_{0}}.
\end{align*}
As $f^{m}x_{g} \in \pi_{f^{m}gx}$, Claim 1 implies:
\begin{align*}
d(x,f^{m}gx) &\geq d(x,f^{m}x_{g}) + d(f^{m}x_{g},f^{m}gx) - D \\
& \geq \frac{1}{\tau}m - (\tau\abs{M_{0}} + D).
\end{align*}  
Since the constants $\tau$, $D$ and $M_{0}$ only depend on $f$, $x$ and the open neighborhoods $U_{+}$ and $U_{-}$, the lemma is proven.
\end{proof}

The next lemma replaces Lemma~4.3 in~\cite{ar:CP12-1} and its proof is a small modification of the proof there.

\begin{lemma}\label{lem:uniform hyperbolic 2}
Fix $x \in X \cap U_{+}$ and for $m \geq 0$ let $\alpha_{m}$ be a geodesic connecting $x$ to $f^{m}gx$.  Then there is an $\epsilon \geq 0$ and $M_{1} \geq 0$ such that for $m \geq M_{1}$ the concatenation of the geodesics $\alpha_{m} \cdot f^{m}g\alpha_{m}$ is a $(1,\epsilon)$-quasi-geodesic.
\end{lemma}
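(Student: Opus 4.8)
The plan is to reduce the lemma to a single estimate, uniform over all $g$ with $gU_{+}\cap U_{-}=\emptyset$: a bound, independent of $m$ once $m$ is large, on the Gromov product at the junction point $f^{m}gx$ of the concatenation $\sigma_{m}:=\alpha_{m}\cdot f^{m}g\alpha_{m}$. First I record why such a bound suffices. Suppose $\GP{x}{(f^{m}g)^{2}x}_{f^{m}gx}\le\kappa$. If $u$ lies on $\alpha_{m}$ and $v$ lies on $f^{m}g\alpha_{m}$, then, since each of $u$ and $v$ lies on a geodesic through $f^{m}gx$, we have $\GP{x}{u}_{f^{m}gx}=d(u,f^{m}gx)$ and $\GP{(f^{m}g)^{2}x}{v}_{f^{m}gx}=d(v,f^{m}gx)$; two applications of Proposition~\ref{prop:extended gp}(4) (using that a Gromov product is at most the distances to its basepoint) then give $\GP{u}{v}_{f^{m}gx}\le\kappa+2\delta$, hence $d(u,v)\ge d(u,f^{m}gx)+d(f^{m}gx,v)-2\kappa-4\delta$. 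As the right-hand side is the length of $\sigma_{m}$ between $u$ and $v$ minus the fixed constant $2\kappa+4\delta$, and as $\sigma_{m}$ is geodesic on each of its two segments, $\sigma_{m}$ is a $(1,2\kappa+4\delta)$-quasi-geodesic.

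It remains to produce $\kappa$. Conjugating by $(f^{m}g)^{-1}$, the junction product equals $\GP{g^{-1}f^{-m}x}{f^{m}gx}_{x}$. Its two arguments converge, as $m\to\infty$, to $g^{-1}A_{-}$ and $A_{+}$, which are distinct since $gU_{+}\cap U_{-}=\emptyset$ forces $gA_{+}\ne A_{-}$; for a fixed $g$ this already bounds the product, as in~\cite{ar:CP12-1}, but to get uniformity I estimate each argument against $A_{+}$. Since the sets $N(A_{+},k)$ form a neighborhood basis at $A_{+}$ (Proposition~\ref{prop:basis}, \cite{col:KB02}), fix $k_{0}>0$ with $N(A_{+},k_{0})\subseteq U_{+}$. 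Because $f^{-m}x\to A_{-}$ in $\overline{X}$ and $U_{-}$ is open, there is $m_{1}$, depending only on $f$, $x$, $U_{-}$, with $f^{-m}x\in U_{-}$ for $m\ge m_{1}$; since $gU_{+}\cap U_{-}=\emptyset$ is equivalent to $g^{-1}U_{-}\cap U_{+}=\emptyset$, for such $m$ we get $g^{-1}f^{-m}x\notin U_{+}$, so $\GP{A_{+}}{g^{-1}f^{-m}x}_{x}\le k_{0}$. For the other argument, apply Proposition~\ref{prop:ns} to the disjoint neighborhoods $N(A_{+},k_{0}+\delta+1)$ of $A_{+}$ and $U_{-}$ of $A_{-}$, obtaining $N_{1}$, depending only on $f$, $x$, $U_{+}$, $U_{-}$, with $f^{n}(\overline{X}-U_{-})\subseteq N(A_{+},k_{0}+\delta+1)$ for $n\ge N_{1}$; since $x\in U_{+}$ forces $gx\notin U_{-}$, for $m\ge N_{1}$ we have $\GP{A_{+}}{f^{m}gx}_{x}>k_{0}+\delta+1$.

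Now take $M_{1}=\max\{m_{1},N_{1}\}$. For $m\ge M_{1}$, Proposition~\ref{prop:extended gp}(4) gives
\[ \GP{A_{+}}{g^{-1}f^{-m}x}_{x}\ \ge\ \min\bigl\{\GP{A_{+}}{f^{m}gx}_{x},\ \GP{f^{m}gx}{g^{-1}f^{-m}x}_{x}\bigr\}-\delta, \]
and since the left side is at most $k_{0}$ while $\GP{A_{+}}{f^{m}gx}_{x}>k_{0}+\delta$, the minimum is necessarily $\GP{f^{m}gx}{g^{-1}f^{-m}x}_{x}$; therefore $\GP{g^{-1}f^{-m}x}{f^{m}gx}_{x}\le k_{0}+\delta$. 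Thus $\GP{x}{(f^{m}g)^{2}x}_{f^{m}gx}\le k_{0}+\delta=:\kappa$ for every $m\ge M_{1}$ and every $g$ with $gU_{+}\cap U_{-}=\emptyset$, and the first paragraph yields the lemma with $\epsilon=2k_{0}+6\delta$, a constant depending only on $f$, $x$, $U_{+}$, $U_{-}$.

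The main obstacle is precisely this uniformity in $g$: the argument in~\cite{ar:CP12-1} bounds the junction Gromov product only by something on the order of $\GP{g^{-1}A_{-}}{A_{+}}_{x}$, which is unbounded as $g$ varies. The fix is to invoke Proposition~\ref{prop:ns} with the shrunken neighborhood $N(A_{+},k_{0}+\delta+1)$ rather than $U_{+}$ itself: the threshold $N_{1}$ delivered by the North--South dynamics of $f$ depends only on $f$ and the two neighborhoods, not on the point being iterated, so $f^{m}gx$ is driven uniformly deep toward $A_{+}$ while $g^{-1}f^{-m}x$ stays uniformly outside $U_{+}$. A secondary point to treat with care is the assertion that $\{N(A_{+},k)\}_{k>0}$ really is a neighborhood basis at $A_{+}$, which is what supplies $k_{0}$; this is standard for the boundary of a $\delta$--hyperbolic space but should be cited.
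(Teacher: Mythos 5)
Your proof is correct and essentially the same as the paper's: both fix $k_0$ with $N(A_+,k_0)\subseteq U_+$, observe that $g^{-1}f^{-m}x\notin U_+$ for large $m$, use North--South dynamics to push $f^mgx$ deep into a smaller neighborhood of $A_+$, and then bound the Gromov product at the junction $f^mgx$ via the four-point inequality. Your version is marginally tighter (one application of the four-point inequality with the triple $A_+,g^{-1}f^{-m}x,f^mgx$ rather than two through the intermediate point $f^mx$) and spells out the deduction of the $(1,\epsilon)$-quasi-geodesic property, which the paper defers to~\cite{ar:CP12-1}.
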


\begin{proof}
Let $d_{m} = d(x,f^{m}gx)$.
 
As $gU_{+} \cap U_{-} = \emptyset$ we have $U_{+} \cap g^{-1}U_{-} = \emptyset$ and so the Gromov product $\GP{g^{-1}f^{-m}x}{f^{m}x}_{x}$ is bounded independent of $g$ and $m \geq M_{1}$ for some constant $M_{1}$.  Indeed, by Proposition~\ref{prop:basis} there is a $k \geq 0$ such that $N(A_{+},k) \subseteq U_{+}$ and $M_{1} \geq 0$ such that $f^{-m}x \in U_{-}$ and $f^{m}x \in N(A_+,k + 2\delta)$ for $m \geq M_{1}$.   Hence $\GP{A_{+}}{g^{-1}f^{-m}x}_{x}  \leq k$ and so $\GP{g^{-1}f^{-m}x}{f^{m}x}_{x} \leq k + \delta$ as:
\begin{equation*}
\min\{\GP{A_{+}}{f^{m}x}_{x}, \GP{g^{-1}f^{-m}x}{f^{m}x}_{x}\} - \delta \leq  \GP{A_{+}}{g^{-1}f^{-m}x}_{x}  \leq k
\end{equation*}
for $m \geq M_{1}$.
 
By making $M_{1}$ larger, we can assume that for $m \geq M_{1}$ we have $f^{m}(\overline{X} - U_{-}) \subseteq N(A_+,k+4\delta)$ by Proposition~\ref{prop:ns}.  Since $gx, x \notin U_{-}$, we have that $f^{m}gx,f^{m}x \in N(A_+,k+4\delta)$ and so $\GP{f^{m}xg}{f^{m}x}_{x} \geq k+3\delta$.  Hence $\GP{g^{-1}f^{-m}x}{f^{m}gx}_{x} \leq k + 2\delta$ as:
\begin{multline*}
 \min \left\{ \GP{g^{-1}f^{-m}x}{f^{m}gx}_{x},\GP{f^{m}gx}{f^{m}x}_{x}  \right\} - \delta \leq \GP{g^{-1}f^{-m}x}{f^{m}x}_{x} \leq k + \delta.
\end{multline*}
Therefore for $C = k + 2\delta$ and $m \geq M_{1}$ we have:
\begin{align*}
d(x,f^{m}gf^{m}gx) &= d(g^{-1}f^{-m}x,gf^{m}x) \\
& \geq d(g^{-1}f^{-m}x,x) + d(x,f^{m}gx) - 2C \\
& = 2d_{m} - 2C.
\end{align*}  
The proof now proceeds exactly as that of Lemma~4.3 in~\cite{ar:CP12-1}.
\end{proof}

\begin{proof}[Proof of Theorem~\ref{th:uniform hyperbolic}] Using lemmas~\ref{lem:uniform hyperbolic 1} and \ref{lem:uniform hyperbolic 2} the proof of Theorem~\ref{th:uniform hyperbolic} proceeds exactly like that of Theorem~4.1 in~\cite{ar:CP12-1}.  We repeat the argument here.

Fix $x \in U_{+} \cap X$, and let $t >0$ and $C \geq 0$ be the constants from Lemma~\ref{lem:uniform hyperbolic 1} ,and $\epsilon > 0$ and $M_{1} \geq 0$  be the constants from Lemma~\ref{lem:uniform hyperbolic 2}.  For $m \geq M_{1}$ we set $L_{m} = d(x,f^mgx) \geq mt - C$.  As in Lemma~\ref{lem:uniform hyperbolic 2}, let $\alpha_{m} \from [0,L_{m}] \to X$ be a geodesic connecting $x$ to $f^mgx$, and let $\beta_{m} = \alpha_{m} \cdot f^mg\alpha_{m}$.  Then define a path $\gamma \from \RR \to X$ by: 
\[ \gamma = \cdots (f^mg)^{-1}\beta_{m} \bigcup_{\alpha_{m}} \beta_{m} \bigcup_{f^mg\alpha_{m}} f^mg\beta_{m} \bigcup_{(f^mg)^2\alpha_{m}} (f^mg)^2\beta_{m} \cdots \]       
See Figure~\ref{fig:concatentation}.

\begin{figure}[ht]
\begin{tikzpicture}[scale=1.75]
\tikzstyle{vertex} =[circle,draw,fill=black,thick, inner sep=0pt,minimum size= 0.75 mm]
\draw[thick] (-3.5,0.5) -- (-3,0) -- (-2,1) (-1,0) -- (0,1) -- (1,0) -- (2,1) -- (3,0) -- (3.5,0.5);
\draw[thick,red] (-2,1) -- (-1,0);
\draw[thick,blue] (-2.9,-0.1) -- (-2,0.8) -- (-1.1,-0.1);
\draw[thick,blue] (-1.9,1.1) -- (-1,0.2) -- (-0.1,1.1);
\node at (-2,0.1) {$(f^mg)^{-1}\beta_{m}$};
\node at (-1,0.7) {$\beta_{m}$};
\node[vertex,label={below:$(f^mg)^{-1}x$}] at (-3,0) {};
\node[vertex,label={below:$f^mgx$}] at (-1,0) {};
\node[vertex,label={below:$(f^mg)^{3}x$}] at (1,0) {};
\node[vertex,label={below:$(f^mg)^{5}x$}] at (3,0) {};
\node[vertex,label={above:$x$}] at (-2,1) {};
\node[vertex,label={above:$(f^mg)^{2}x$}] at (0,1) {};
\node[vertex,label={above:$(f^mg)^{4}x$}] at (2,1) {};
\end{tikzpicture}
\caption{The path $\gamma$ in the proof of Theorem~\ref{th:uniform hyperbolic}.}\label{fig:concatentation}
\end{figure}
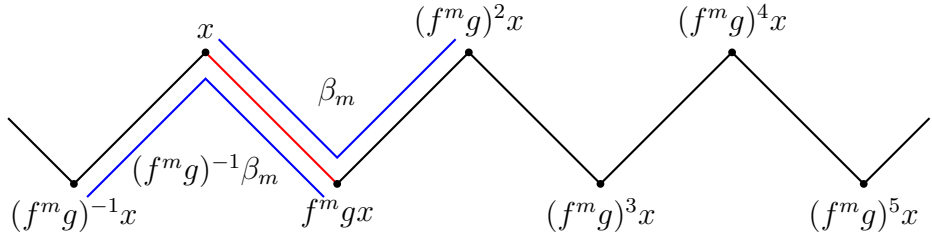

By Lemma~\ref{lem:uniform hyperbolic 2}, $\gamma$ is an $L_{m}$--local $(1,\epsilon)$--quasi-geodesic and hence for $m$ large enough, $\gamma$ is a $(\lambda',\epsilon')$--quasi-geodesic from some $\lambda' \geq 1$ and $\epsilon' \geq 0$ (see~\cite[III.H.1.7 and III.H.1.13]{bk:BH99} or \cite[Theorem~4.4]{ar:CP12-1}).  

Let $N$ be such that $t = \frac{1}{\lambda'}L_{m}N - \epsilon' > 0$.  Then for any $k \neq\ell \in \ZZ$ we have 
\begin{equation*}
d((f^mg)^{Nk}x,(f^mg)^{N\ell}x) \geq \frac{1}{\lambda'}L_{m}N\abs{k - \ell} - \epsilon' \geq t \abs{k - \ell}.
\end{equation*}
Thus $(f^{m}g)^{N}$ is hyperbolic and therefore so is $f^{m}g$.
\end{proof}

We conclude this section with an application of Theorem~\ref{th:uniform hyperbolic} to dependent hyperbolic isometries (Theorem~\cite[Theorem~4.1]{ar:CP12-1} would suffice as well).

\begin{proposition}\label{prop:dependent hyperbolic}
Suppose $X$ is a $\delta$--hyperbolic space and $f,g \in \Isom(X)$ are dependent hyperbolic isometries.  There is an $N \geq 0$ such that if $n \geq N$ then $fg^{n}$ is hyperbolic.
\end{proposition}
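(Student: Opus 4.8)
The plan is to apply Theorem~\ref{th:uniform hyperbolic} with the roles reversed: take $g$ to be the hyperbolic isometry that gets raised to a large power and $f$ to be the perturbation, and then use a conjugation to rewrite $g^{n}f$ as $fg^{n}$.

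Write $A_{+},A_{-}$ for the attracting and repelling fixed points of $f$ and $B_{+},B_{-}$ for those of $g$. The key point extracted from dependence is that $fB_{+}\neq B_{-}$. Indeed, since $f$ and $g$ are dependent, at least one of $B_{+},B_{-}$ lies in $\{A_{+},A_{-}\}$, the fixed set of $f$ in $\partial X$. If $B_{+}\in\{A_{+},A_{-}\}$ then $fB_{+}=B_{+}\neq B_{-}$. If instead $B_{-}\in\{A_{+},A_{-}\}$ then $f$ fixes $B_{-}$, so $fB_{+}=B_{-}$ would force $B_{+}=f^{-1}B_{-}=B_{-}$, which is absurd. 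In either case $fB_{+}\neq B_{-}$, equivalently $f^{-1}B_{-}\neq B_{+}$.

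Next I would build disjoint neighborhoods $U_{+}$ of $B_{+}$ and $U_{-}$ of $B_{-}$ in $\overline{X}$ with $fU_{+}\cap U_{-}=\emptyset$, which is exactly the hypothesis needed to feed into Theorem~\ref{th:uniform hyperbolic}. Recall that $f$ extends to a homeomorphism of $\overline{X}$, and that two distinct points $\hat{x}\neq\hat{y}$ of $\partial X$ admit disjoint basic neighborhoods $N(\hat{x},k)$ and $N(\hat{y},k)$ as in Proposition~\ref{prop:basis}: by Proposition~\ref{prop:extended gp} one has $\GP{\hat{x}}{\hat{y}}_{w}<\infty$, and if $z$ lay in both neighborhoods then $\GP{\hat{x}}{\hat{y}}_{w}\geq k-\delta$, a contradiction once $k>\GP{\hat{x}}{\hat{y}}_{w}+\delta$. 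Using $B_{+}\neq f^{-1}B_{-}$ choose disjoint open sets $P\ni B_{+}$ and $Q\ni f^{-1}B_{-}$, and using $B_{+}\neq B_{-}$ choose disjoint open sets $P'\ni B_{+}$ and $R\ni B_{-}$. Then $U_{+}:=P\cap P'$ and $U_{-}:=R\cap fQ$ are open neighborhoods of $B_{+}$ and $B_{-}$ with $U_{+}\cap U_{-}\subseteq P'\cap R=\emptyset$ and $fU_{+}\cap U_{-}\subseteq fP\cap fQ=f(P\cap Q)=\emptyset$.

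Finally, apply Theorem~\ref{th:uniform hyperbolic} to the hyperbolic isometry $g$, its fixed points $B_{+},B_{-}$, and the neighborhoods $U_{+},U_{-}$: there is an $N\geq 1$ such that $g^{n}f$ is hyperbolic for every $n\geq N$, since $fU_{+}\cap U_{-}=\emptyset$. Conjugating by the isometry $f$ we have $fg^{n}=f(g^{n}f)f^{-1}$, and hyperbolicity is invariant under conjugation by an isometry (immediate from Definition~\ref{def:isometries}, since $(fg^{n})^{k}f=f(g^{n}f)^{k}$), so $fg^{n}$ is hyperbolic for all $n\geq N$. I do not expect a genuine obstacle here---all the substance is packaged in Theorem~\ref{th:uniform hyperbolic}; the only thing requiring care is checking $fB_{+}\neq B_{-}$ in each case of the fixed-point coincidence, after which the separation argument and the conjugation trick are routine.
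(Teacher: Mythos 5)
Your proof is correct and takes essentially the same route as the paper's: deduce $fB_{+}\neq B_{-}$ from dependence, choose disjoint neighborhoods $U_{+},U_{-}$ of $B_{+},B_{-}$ with $fU_{+}\cap U_{-}=\emptyset$, apply Theorem~\ref{th:uniform hyperbolic} with the roles of $f$ and $g$ interchanged to get that $g^{n}f$ is hyperbolic for $n\geq N$, and then pass to the conjugate $fg^{n}$. The only difference is that you spell out the two-case argument for $fB_{+}\neq B_{-}$ and the explicit construction of the neighborhoods, both of which the paper leaves implicit.
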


\begin{proof}
Let $A_{+}$, $A_{-}$, $B_{+}$, $B_{-} \in \partial X$ be the attracting and repelling fixed points for $f$ and $g$ respectively.  Then $fB_{+} \neq B_{-}$ as one of these points is fixed by $f$.  Thus there are neighborhoods $V_{+}$ and $V_{-}$ for $B_{+}$ and $B_{-}$ respectively in $\overline{X}$ such that $fV_{+} \cap V_{-} = \emptyset$.  Let $N$ be the constant from Theorem~\ref{th:uniform hyperbolic} applied to this set-up after interchanging the roles of $f$ and $g$.  Hence $g^{n}f$, and therefore the conjugate $fg^{n}$ as well, is hyperbolic when $n \geq N$.  
\end{proof}


\section{Finding neighborhoods}\label{sec:neighborhoods}

We now need to understand when we can find neighborhoods satisfying the hypotheses of Theorem~\ref{th:uniform hyperbolic} for all powers (or at least lots of powers) of a given $g$.  There are two cases that we examine: first when $g$ has a fixed point and second when $g$ is hyperbolic.

\begin{proposition}\label{prop:elliptic}
Suppose $X$ is a $\delta$--hyperbolic space and $f \in \Isom(X)$ is a hyperbolic isometry with attracting and repelling fixed points $A_{+}$ and $A_{-}$ in $\partial X$. Suppose $g \in \Isom(X)$ has a fixed point and consider a sequence of elements $(g_{k})_{k \in \NN} \subseteq  \I{g}$.  Then either:
\begin{enumerate}
\item there are disjoint neighborhoods $U_{+}$ and $U_{-}$ of $A_{+}$ and $A_{-}$ respectively and a constant $M \geq 1$ such that if $k \geq M$ then $g_{k}U_{+} \cap U_{-} = \emptyset$; or \label{item:elliptic 1}
\item there is a subsequence $(g_{k_{n}})$ so that $g_{k_{n}}A_{+} \to A_{-}$\label{item:elliptic 2}.\end{enumerate}
Further, if $gA_{-} = A_{-}$ then \eqref{item:elliptic 1} holds.
\end{proposition}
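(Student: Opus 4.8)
The plan is to fix a point $p\in X$ with $gp=p$ and to use $p$ as the basepoint for every Gromov product. Since every element of $\I{g}$ then also fixes $p$, the Gromov product based at $p$ is invariant under each $g_{k}$, on all of $\overline{X}$ (one moves the basepoint by $g_{k}\inv$ and it comes back to $p$). The single quantity I would track is
\[
r_{k}\;=\;\GP{A_{-}}{g_{k}A_{+}}_{p}\;=\;\GP{g_{k}\inv A_{-}}{A_{+}}_{p}\;\in\;[0,\infty],
\]
the two expressions being equal by this $g_{k}$--invariance. I claim the dichotomy in the statement is precisely the dichotomy ``$\limsup_{k}r_{k}<\infty$'' (which gives \eqref{item:elliptic 1}) versus ``$\limsup_{k}r_{k}=\infty$'' (which gives \eqref{item:elliptic 2}), and that in each case a short argument with Proposition~\ref{prop:extended gp}(4) does the job.

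If $\limsup_{k}r_{k}=\infty$, pass to a subsequence with $r_{k_{n}}\to\infty$. Since $A_{-}\in\partial X$, having $\GP{A_{-}}{g_{k_{n}}A_{+}}_{p}\to\infty$ is exactly the assertion that $g_{k_{n}}A_{+}\to A_{-}$ in $\overline{X}$, by the description of the topology in Proposition~\ref{prop:basis}; this is \eqref{item:elliptic 2}.

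If instead $\limsup_{k}r_{k}<\infty$, choose $B\geq 0$ and $M\geq 1$ with $r_{k}\leq B$ for $k\geq M$. Since $A_{+}\neq A_{-}$, Proposition~\ref{prop:extended gp}(1) gives $\GP{A_{+}}{A_{-}}_{p}<\infty$, so I can choose $s$ larger than $\max\{B,\GP{A_{+}}{A_{-}}_{p}\}+\delta$ and set $U_{\pm}=N(A_{\pm},s)$. These are open neighborhoods of $A_{\pm}$ (Proposition~\ref{prop:basis}), and they are disjoint because a point of $U_{+}\cap U_{-}$ would force $\GP{A_{+}}{A_{-}}_{p}>s-\delta$ via Proposition~\ref{prop:extended gp}(4). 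Now suppose $k\geq M$, $y\in U_{+}$, and $g_{k}y\in U_{-}$. Then $\GP{A_{+}}{y}_{p}>s$, and by $g_{k}$--invariance $\GP{g_{k}\inv A_{-}}{y}_{p}=\GP{A_{-}}{g_{k}y}_{p}>s$; Proposition~\ref{prop:extended gp}(4) then forces
\[
r_{k}\;=\;\GP{g_{k}\inv A_{-}}{A_{+}}_{p}\;\geq\;\min\bigl\{\GP{g_{k}\inv A_{-}}{y}_{p},\,\GP{y}{A_{+}}_{p}\bigr\}-\delta\;>\;s-\delta\;>\;B,
\]
contradicting $r_{k}\leq B$. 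Hence $g_{k}U_{+}\cap U_{-}=\emptyset$ for all $k\geq M$, which is \eqref{item:elliptic 1}.

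Finally, for the last assertion: if $gA_{-}=A_{-}$ then every $g_{k}\in\I{g}$ fixes $A_{-}$, so $g_{k}\inv A_{-}=A_{-}$ and $r_{k}=\GP{A_{-}}{A_{+}}_{p}<\infty$ for all $k$; thus $\limsup_{k}r_{k}<\infty$ and we are in the previous case, so \eqref{item:elliptic 1} holds. The only delicate ingredient in the whole argument is the $g_{k}$--invariance of the Gromov product based at $p$, and this is exactly where the hypothesis that $g$ (hence every $g_{k}$) has a fixed point in $X$ is essential: without a common fixed point, the passage from $\GP{A_{-}}{g_{k}A_{+}}_{p}$ to $\GP{g_{k}\inv A_{-}}{A_{+}}_{p}$ via the isometry $g_{k}\inv$ moves the basepoint to $g_{k}\inv p$, and correcting back to $p$ introduces an error comparable to $d(p,g_{k}\inv p)$, which need not remain bounded as $k$ varies.
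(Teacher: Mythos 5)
Your proof is correct and is, in essence, a streamlined version of the paper's argument. Both proofs hinge on the same key observation: since $g_k p = p$, the Gromov product based at $p$ is invariant under each $g_k$ (on all of $\overline{X}$, directly from the infimum/liminf definition), and this converts the geometric condition $g_k U_+ \cap U_- = \emptyset$ into a statement about the single quantity $\GP{A_-}{g_k A_+}_p = \GP{g_k\inv A_-}{A_+}_p$. The paper packages this via a nested system of neighborhoods $U_\pm^n$ and index sets $I_n = \{k : g_k U_+^n \cap U_-^n \neq \emptyset\}$, with the dichotomy being whether some $I_n$ is finite; your bookkeeping via $r_k$ and its $\limsup$ encodes the same information more transparently, and your direct construction of the disjoint neighborhoods $U_\pm = N(A_\pm,s)$ using Propositions~\ref{prop:basis} and~\ref{prop:extended gp}(4) is sound. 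Where you genuinely save effort is in the final assertion: from $gA_- = A_-$ you get $g_k\inv A_- = A_-$ for every $g_k \in \I{g}$ at once, so $r_k = \GP{A_-}{A_+}_p$ is a single finite constant and case~\eqref{item:elliptic 1} follows immediately; the paper instead establishes the same boundedness by running the sequences $(f^{\pm k}p)$ out to the boundary and estimating $\GP{f^{-k}p}{g^n f^k p}_p$ term by term, a comparatively roundabout route that your shortcut cleanly replaces.
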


\begin{proof}
Let $p \in X$ be such that $gp = p$.  Thus $g_{k}p = p$ for all $k \in \NN$.  

Fix a system of decreasing disjoint neighborhoods $U_{-}^{k}$ of $A_{-}$ and $U_{+}^{k}$ of $A_{+}$ indexed by the natural numbers so that:
\begin{align*}
\GP{x}{A_{+}}_{p} \geq k + \delta & \mbox{ for } x \in U_{+}^{k} \mbox{, and}\\
\GP{x}{A_{-}}_{p} \geq k + \delta & \mbox{ for } x \in U_{-}^{k}. 
\end{align*}
This implies that for any two points $x,x'\in U_{+}^{k}$
we have that 
\[ \GP{x}{x'}_{p} \geq \min \{ \GP{x}{A_{+}}_{p},\GP{x'}{A_{+}}_{p} \} - \delta \geq k. \] 
Likewise for any two points $y,y' \in U_{-}^{k}$ we have that $\GP{y}{y'}_p\geq k$.

For each $n \in \NN$, define $I_n = \{ k \in \NN \mid g_kU_+^n \cap U_-^n \neq \emptyset \}$.  If $I_n$ is a finite set for some $n$, then \eqref{item:elliptic 1} holds for the neighborhoods $U_{-}=U_-^n$ and $U_{+}=U_+^n$ where $M = \max I_{n} + 1$. 

Otherwise, there is a strictly increasing sequence $(k_{n})_{n \in \NN}$ such that $k_{n} \in I_{n}$.  Hence, for each $n \in \NN$, there is an element $x_{n} \in U_{+}^{n}$ such that $g_{k_{n}}x_{n} \in U_{-}^{n}$.  In particular,
\begin{equation}\label{eq:finite then fix 1}
\GP{g_{k_{n}}x_{n}}{A_{-}}_{p}\geq n+\delta. 
\end{equation}
On the other hand, since $x_{n} \in U_{+}^{n}$ and $g_{k_{n}}$ fixes the point $p$, we have
\begin{align}\label{eq:finite then fix 2}
\GP{g_{k_{n}}x_{n}}{g_{k_{n}}A_{+}}_{p} & = \GP{g_{k_{n}}x_{n}}{g_{k_{n}}A_{+}}_{g_{k_{n}}p}\notag \\ 
&= \GP{x_{n}}{A_{+}}_p \geq n+\delta.
\end{align}
Combining \eqref{eq:finite then fix 1} and \eqref{eq:finite then fix 2}, we get $\GP{g_{k_{n}}A_{+}}{A_{-}}_{p}\geq n$ for any $n \in \NN$.  Hence \eqref{item:elliptic 2} holds.  

Now suppose that $gA_{-} = A_{-}$.  As $A_{+} \neq A_{-}$, there is a constant $D \geq 0$ such that $\GP{f^{-k}p}{f^{k}p}_{p} \leq D$ for all $k \in \NN$.  
For any $n \in \ZZ$, we have that $\GP{f^{-k}p}{g^{n}f^{-k}p}_{p} \to \infty$ as $k \to \infty$.  In particular, for each $n \in \ZZ$, there is a constant $K_{n} \geq 0$ such that $\GP{f^{-k}p}{g^{n}f^{-k}p}_{p} \geq D + \delta$ for $k \geq K_{n}$.  Therefore $\GP{g^{n}f^{-k}p}{f^{k}p}_{p} \leq D + \delta$ for $k \geq K_{n}$ as:
\[ \GP{f^{-k}p}{f^{k}p}_{p} \geq \min\left\{\GP{f^{-k}p}{g^{n}f^{-k}p}_{p},\GP{g^{n}f^{-k}p}{f^{k}p}_{p} \right\} - \delta. \]  As $gp = p$, we have $\GP{f^{-k}p}{g^{n}f^{k}p}_{p} = \GP{g^{-n}f^{-k}p}{f^{k}p}_{p}$ and so we see that $\GP{f^{-k}p}{g^{n}f^{k}p}_{p} \leq D + \delta$ for $k \geq K_{-n}$.  This shows that \eqref{item:elliptic 2} cannot hold if $gA_{-} = A_{-}$. 
\end{proof}

\begin{proposition}\label{prop:independent hyperbolic}
Suppose $X$ is a $\delta$--hyperbolic space and $f, g \in \Isom(X)$ are independent hyperbolic isometries.  There are disjoint neighborhoods $U_{+}$ and $U_{-}$ of $A_{+}$ and $A_{-}$ and an $N \geq 1$ such that if $k \geq N$ then $g^{k}U_{+} \cap U_{-} = \emptyset$.
\end{proposition}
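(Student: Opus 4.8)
The plan is to use the North--South dynamics of both $f$ and $g$ (Proposition~\ref{prop:ns}) to build the neighborhoods, exploiting that independence means the four fixed points $A_{+}, A_{-}, B_{+}, B_{-}$ are distinct. First I would choose, using Proposition~\ref{prop:basis}, pairwise disjoint neighborhoods $U_{+}$, $U_{-}$, $V_{+}$, $V_{-}$ of $A_{+}$, $A_{-}$, $B_{+}$, $B_{-}$ respectively; this is possible precisely because the four boundary points are distinct, which is where independence enters. The key point to establish is that high positive powers of $g$ push everything outside $V_{-}$ into $V_{+}$, and in particular push the set $U_{+}$ (which is disjoint from $V_{-}$) into $V_{+}$, hence off of $U_{-}$ (which is also disjoint from $V_{+}$).

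Concretely, the steps are: (1) fix the four disjoint neighborhoods as above; (2) apply Proposition~\ref{prop:ns} to the hyperbolic isometry $g$, with attracting fixed point $B_{+}$ and repelling fixed point $B_{-}$, using the disjoint neighborhoods $V_{+}$ and $V_{-}$: this yields an $N \geq 1$ such that for $k \geq N$ we have $g^{k}(\overline{X} - V_{-}) \subseteq V_{+}$; (3) observe that since $U_{+} \cap V_{-} = \emptyset$, we have $U_{+} \subseteq \overline{X} - V_{-}$, so $g^{k}U_{+} \subseteq V_{+}$ for all $k \geq N$; (4) finally, since $V_{+} \cap U_{-} = \emptyset$, conclude $g^{k}U_{+} \cap U_{-} = \emptyset$ for all $k \geq N$, as required.

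I do not anticipate a serious obstacle here: the statement is essentially a direct packaging of North--South dynamics for $g$ together with the disjointness afforded by independence. The only point requiring mild care is making sure the chosen neighborhood $U_{+}$ of $A_{+}$ can simultaneously be made disjoint from both $V_{-}$ (needed for step~3) and that $U_{-}$ is disjoint from $V_{+}$ (needed for step~4); both follow immediately from the Hausdorff-type separation guaranteed by Proposition~\ref{prop:basis} once we know $A_{+} \neq B_{-}$ and $A_{-} \neq B_{+}$, which is part of the definition of independence. One should also note in passing that $U_{+}$ and $U_{-}$ themselves are disjoint by construction, so they genuinely serve as the neighborhoods of $A_{+}$ and $A_{-}$ demanded by the statement, making the proposition's conclusion immediate.
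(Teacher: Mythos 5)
Your proof is correct and follows exactly the same route as the paper: choose four mutually disjoint neighborhoods of the four distinct fixed points (using independence), then apply North--South dynamics of $g$ to push $U_{+} \subseteq \overline{X} - V_{-}$ into $V_{+}$, which is disjoint from $U_{-}$. No differences worth noting.
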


\begin{proof}
Let $A_{+}$, $A_{-}$, $B_{+}$, $B_{-} \in \partial X$ be the attracting and repelling fixed points for $f$ and $g$ respectively.  As $f$ and $g$ are independent, the set $\{A_{-}, A_{+}, B_{-}, B_{+}\}$ consists of 4 distinct points.  Take mutually disjoint open neighborhoods $U_{-}, U_{+}, V_{-}, V_{+}$ of $A_{-}, A_{+}, B_{-}, B_{+}$ respectively. North-South dynamics of the action of $g$ on $\overline{X}$ implies that there exist a $N \geq 1$ such that $g^{k}(\overline{X} - V_{-})\subset V_{+}$ for all $k \ge N$. In particular, $g^{k}U_{+} \subseteq V_{+}$ and since $V_{+} \cap U_{-}=\emptyset$ we see that $g^{k}U_{+} \cap U_{-} = \emptyset$ for $k \geq N$.
\end{proof}


\section{Simultaneously producing hyperbolic isometries}\label{sec:simultaneous}

We can now apply the above propositions via a careful induction to prove the main result.

\begin{theorem}\label{th:constructing hyperbolic actions}
Suppose that $\{ X_{i} \}_{i = 1,\ldots, n}$ is a collection of $\delta$--hyperbolic spaces, $G$ is a group and for each $i = 1,\ldots, n$ there is a homomorphism $\rho_{i} \from G \to \Isom(X_{i})$ such that:
\begin{enumerate}
\item there is an element $f_{i} \in G$ such that $\rho_{i}(f_{i})$ is hyperbolic; and
\item for each $g \in G$, either $\rho_{i}(g)$ has a periodic orbit or is hyperbolic.
\end{enumerate}
Then there is an $f \in G$ such that $\rho_{i}(f)$ is hyperbolic for all $i = 1,\ldots,n$.
\end{theorem}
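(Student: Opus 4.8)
The plan is to argue by induction on $n$, the engine being a way of combining two isometries that are hyperbolic for different actions into a single element hyperbolic for both.

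For $n=1$ one takes $f=f_1$. For the inductive step, suppose $f\in G$ has been found with $\rho_i(f)$ hyperbolic for $i=1,\ldots,n-1$; the goal is to replace it by an element that is in addition hyperbolic for $\rho_n$. If $\rho_n(f)$ is already hyperbolic there is nothing to do, so by hypothesis~(2) we may assume $\rho_n(f)$ has a periodic orbit, whence some power of $\rho_n(f)$ fixes a point. Since powers of hyperbolic isometries are hyperbolic, replacing $f$ by a suitable power does no harm and lets us arrange that $\rho_n(f)$ fixes a point $q\in X_n$; likewise, setting $g=f_n$ and replacing it by a common power over the finitely many spaces, we arrange that $\rho_i(g)$ is either hyperbolic or fixes a point for each $i=1,\ldots,n-1$, while $\rho_n(g)$ remains hyperbolic.

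The candidate is then $f^{a}g^{b}$ for suitably large integers $a,b$; note that $f^{a}g^{b}$ is conjugate to $g^{b}f^{a}$. For $i=1,\ldots,n-1$ let $A^{(i)}_{\pm}$ denote the fixed points of $\rho_i(f)$. The first step is to produce disjoint neighborhoods $U^{(i)}_{\pm}$ of $A^{(i)}_{\pm}$ and a bound $N_i$ with $\rho_i(g)^{b}\,U^{(i)}_{+}\cap U^{(i)}_{-}=\emptyset$ for all large $b$; Theorem~\ref{th:uniform hyperbolic} then furnishes a bound $M_i$ so that $\rho_i(f)^{a}\rho_i(g)^{b}$ is hyperbolic once $a\ge M_i$. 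When $\rho_i(g)$ is hyperbolic and independent from $\rho_i(f)$, the neighborhoods come from Proposition~\ref{prop:independent hyperbolic}; when $\rho_i(g)$ fixes a point, from the first alternative of Proposition~\ref{prop:elliptic}. Symmetrically, for $i=n$, with $B_{\pm}$ the fixed points of $\rho_n(g)$, one applies Proposition~\ref{prop:elliptic} to the sequence $(\rho_n(f)^{a})_{a\in\NN}\subseteq\I{\rho_n(f)}$ to get disjoint neighborhoods $V_{\pm}$ of $B_{\pm}$ and a bound $N_n$ with $\rho_n(f)^{a}\,V_{+}\cap V_{-}=\emptyset$ for $a$ large, and Theorem~\ref{th:uniform hyperbolic} then supplies a bound $M_n$ with $\rho_n(g)^{b}\rho_n(f)^{a}$ hyperbolic once $b\ge M_n$. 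Taking $a$ and $b$ each beyond the finitely many thresholds $M_i,N_i$ produced above makes $f^{a}g^{b}$ hyperbolic for every $\rho_i$, closing the induction.

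The hard part is that Proposition~\ref{prop:elliptic} need not fall into its \emph{first} alternative: it is genuinely possible that $\rho_i(g)^{k_n}A^{(i)}_{+}\to A^{(i)}_{-}$ along a subsequence (and likewise $\rho_n(f)^{a_n}B_{+}\to B_{-}$), in which case no fixed pair of neighborhoods is separated by all large powers and the clean argument above breaks down; disposing of this is the crux. Three ingredients are available. First, the final clause of Proposition~\ref{prop:elliptic} rules out the bad alternative once the elliptic element is known to fix the relevant repelling fixed point, a situation one tries to force by passing to inverses or to further powers. Second, after shrinking the neighborhoods the ``bad'' exponents — those $b$ with $\rho_i(g)^{b}U^{(i)}_{+}\cap U^{(i)}_{-}\ne\emptyset$ — become rare enough that, across only finitely many spaces, a simultaneous admissible choice of $a$ and $b$ still survives (now their relative sizes, and the order in which one picks them, matter). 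Third, when $\rho_i(f)$ and $\rho_i(g)$ are \emph{dependent} hyperbolic for some $i\le n-1$, so that Proposition~\ref{prop:independent hyperbolic} does not apply, one argues via Proposition~\ref{prop:dependent hyperbolic}, supplemented by the elementary fact that $\rho_i(f^{a}g^{b})$ is automatically hyperbolic when $\rho_i(f)$ and $\rho_i(g)$ have the same ordered pair of fixed points.
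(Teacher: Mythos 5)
Your overall strategy — induction on $n$, reduce to $\rho_n(f)$ elliptic and $g$ hyperbolic on $X_n$, normalize by common powers, and then seek $f^ag^b$ via Theorem~\ref{th:uniform hyperbolic} after producing invariant separating neighborhoods — is precisely the strategy of the paper, and your decomposition into the easy cases (independent hyperbolics via Proposition~\ref{prop:independent hyperbolic}, dependent hyperbolics via Proposition~\ref{prop:dependent hyperbolic}, elliptics satisfying alternative~\eqref{item:elliptic 1} of Proposition~\ref{prop:elliptic}) is also the same. You correctly identify that the crux is the case where only alternative~\eqref{item:elliptic 2} of Proposition~\ref{prop:elliptic} is available, i.e.\ $\rho_i(g^{k_n})A^{(i)}_+\to A^{(i)}_-$ along a subsequence. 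But the resolution you sketch for that case has a genuine gap.

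Your ``ingredient 1'' proposes to force $\rho_i(g)$ to fix $A^{(i)}_-$ by passing to inverses or to further powers, so that the final clause of Proposition~\ref{prop:elliptic} applies. This cannot work: passing to $g^{-1}$ does not change whether $A^{(i)}_-$ is fixed, and once $g$ is normalized so that $\rho_i(g)$ acts with infinite order, no positive power of $\rho_i(g)$ will fix $A^{(i)}_-$ unless $\rho_i(g)$ already does. Your ``ingredient 2'' (that after shrinking neighborhoods the bad exponents become rare) is also not a valid argument: it is entirely possible that $\rho_i(g^b)A^{(i)}_+\to A^{(i)}_-$ as $b\to\infty$ along the \emph{whole} tail, so that every large exponent is bad for any fixed pair of neighborhoods. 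What actually saves the day, and what the paper's proof does, is the opposite of what ingredient 1 suggests. One first normalizes $g$ so that for each $i$ either $\rho_i(g)A^{(i)}_-=A^{(i)}_-$ or $\rho_i(g^b)A^{(i)}_-\ne A^{(i)}_-$ for all $b\ne 0$. If alternative~\eqref{item:elliptic 2} holds, the final clause of Proposition~\ref{prop:elliptic} tells us $\rho_i(g)$ does \emph{not} fix $A^{(i)}_-$, so by the normalization the entire orbit $\{\rho_i(g^k)A^{(i)}_-\}_{k\in\ZZ}$ is disjoint from $A^{(i)}_-$. One then passes to a common subsequence $(k_n)$ along which each $\rho_i(g^{k_n})A^{(i)}_+$ converges or is discrete, and shifts by a single well-chosen $K$: if $\rho_i(g^{k_n})A^{(i)}_+\to p_i$ with $p_i=\rho_i(g^{K_i})A^{(i)}_-$, then for $K\ne -K_i$ the shifted limit $\rho_i(g^K)p_i=\rho_i(g^{K+K_i})A^{(i)}_-$ is \emph{not} $A^{(i)}_-$; choosing $K$ to avoid finitely many forbidden values puts every problematic index back into alternative~\eqref{item:elliptic 1} for the shifted sequence $(g^{K+k_n})$. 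That shift argument is the missing idea. Without it, your outline leaves the hardest case unaddressed. (Your ingredient 3 and the remark that the order of choosing $a$ and $b$ matters are on the right track and match the paper's bookkeeping with $C_i(a)$.)
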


\begin{proof}
We will prove this by induction.  The case $n=1$ obviously holds by hypothesis. 

For $n \geq 2$, by induction there is an $f\in G$ such that for $i = 1,\ldots, n-1$ the isometry $\rho_{i}(f) \in \Isom (X_{i})$ is hyperbolic.   For $i = 1,\ldots,n-1$, let $A_{+}^{i}, A_{-}^{i} \in \bd X_{i}$ be the attracting and repelling fixed points of the hyperbolic isometry $\rho_{i}(f)$.  By hypothesis, there is a $g \in G$ so that $\rho_{n}(g) \in \Isom(X_{n})$ is hyperbolic.  Let $B_{+},B_{-} \in \bd X_{n}$ be the attracting and repelling fixed points of the hyperbolic isometry $\rho_{n}(g)$.  Our goal is to find $a,b \in \NN$ so that $\rho_{i}(f^{a}g^{b})$ is hyperbolic for each $i = 1,\ldots,n$.  

We begin with some simplifications.  If $\rho_{n}(f) \in \Isom(X_{n})$ is hyperbolic then there is nothing to prove, so assume that $\rho_{n}(f)$ has a periodic orbit, and so after replacing $f$ by a power we have that $f$ has a fixed point.  By replacing $g$ with a power if necessary, we can assume that for $i = 1,\ldots, n-1$ the isometry $\rho_{i}(g)$ is either the identity or has infinite order.  In fact, we can assume that $\rho_{i}(g)$ has infinite order. Indeed, if $\rho_{i}(g)$ is the identity, then for all $a,b \in \NN$ we have $\rho_{i}(f^{a}g^{b}) = \rho_{i}(f^{a})$, which is hyperbolic by the inductive hypothesis. Hence any powers for $f$ and $g$ that work for all other indices between $1$ and $n-1$ necessarily work for this index $i$ as well.  Again, by replacing $g$ with a power if necessary, we can assume that for each $i = 1, \ldots,n-1$ either $\rho_{i}(g)A_{-}^{i} = A_{-}^{i}$ or $\rho_{i}(g^{b})A_{-}^{i} \neq A_{-}^{i}$ for each $b \in \ZZ - \{0\}$.  Finally, replacing $g$ with a further power necessary, we can assume that for each $i = 1, \ldots,n-1$ if  $\rho_i(g)$ is not hyperbolic, then it has a fixed point. Analogously, by replacing $f$ with a power if necessary, we can assume that the isometry $\rho_{n}(f)$ has infinite order and that either $\rho_{n}(f)B_{-} = B_{-}$ or $\rho_{n}(f^{a})B_{-} \neq B_{-}$ for $a \in \ZZ - \{0\}$.

There are various scenarios depending on the dynamics of the isometries $\rho_{i}(g)$ and $\rho_{n}(f)$.

Let $E \subseteq \{1,\ldots,n-1\}$ be the subset where the isometries $\rho_{i}(g)$ has a fixed point. Let $H = \{1,\ldots,n-1\} - E$; this is of course the subset where $\rho_{i}(g)$ is hyperbolic.  For $i \in H$, let $B_{+}^{i}, B_{-}^{i} \in \bd X_{i}$ be the attracting and repelling fixed points of the hyperbolic isometry $\rho_{i}(g)$.  We further identify the subset $H' \subseteq H$ where $\rho_{i}(f)$ and $\rho_{i}(g)$ are independent.  

We first deal with the spaces where $\rho_{i}(g)$ is hyperbolic.  To this end, fix $i \in H$.  

If $i \in H'$, then by Proposition~\ref{prop:independent hyperbolic} there are disjoint neighborhoods $U_{+}^{i},U_{-}^{i} \subset \overline{X_{i}}$ of $A_{+}^{i}$ and $A_{-}^{i}$ respectively and an $N_{i}$ so that for $k \geq N_{i}$ we have $\rho_{i}(g^{k})U_{+}^{i} \cap U_{-}^{i} = \emptyset$.  Applying Theorem~\ref{th:uniform hyperbolic} with the neighborhoods $U_{+}$ and $U_{-}$, there is a $M_{i}$ so that for $a \geq M_{i}$ and $b \geq N_{i}$ the element $\rho_{i}(f^{a}g^{b})$ is hyperbolic.  

If $i \in H - H'$ then, by Proposition~\ref{prop:dependent hyperbolic}, for each $a \in \NN$ there is a constant $C_{i}(a) \geq 0$ such that the isometry $\rho_{i}(f^{a}g^{b})$ is hyperbolic if $b \geq C_{i}(a)$.    

To create a uniform statements in the sequel, for $i \notin H'$ (including $i \in E$), set $C_{i}(a) = 0$ for all $a \in \NN$. Also, set $M_{i} = N_{i} = 0$ for $i \in H - H'$.   

Summarizing the situation for far, we let $\mathsf{M_{0}} = \max\{ M_{i} \mid i \in H \}$ and $\mathsf{N_{0}} = \max\{ N_{i} \mid i \in H \}$.  Then, at this point, we know that if $i \in H$, $a \geq \mathsf{M_{0}}$ and $b \geq \mathsf{N_{0}}$ then the element $\rho_{i}(f^{a}g^{b})$ is hyperbolic so long as $b \geq C_{i}(a)$.

Next we deal with the spaces where $\rho_{i}(g)$ has a fixed point.  To this end, fix $i \in E$. 

Let $E' \subseteq E$ be the subset where condition~\eqref{item:elliptic 1} of Proposition~\ref{prop:elliptic} holds using $\rho_{i}(g_{k}) = \rho_{i}(g^{\mathsf{N_{0}}+k})$.  The analysis here is similar to the the case when $i \in H'$.  By assumption, for $i \in E'$, there are disjoint neighborhoods $U_{+}^{i},U_{-}^{i} \subset \overline{X_{i}}$ of $A_{+}^{i}$ and $A_{-}^{i}$ respectively and an $N_{i}$ so that for $k \geq N_{i}$ we have $\rho_{i}(g_{k})U_{+}^{i} \cap U_{-}^{i} = \emptyset$.   Applying Theorem~\ref{th:uniform hyperbolic} with the neighborhoods $U_{+}^{i}$ and $U_{-}^{i}$, there is a $M_{i}$ so that for $a \geq M_{i}$ the element $\rho_{i}(f^{a}g^{b})$ is hyperbolic if $b \geq N_{i}$.      

To summarize again, let $\mathsf{M_{1}} =  \max\{ M_{i} \mid i \in H \cup E' \}$ and $\mathsf{N_{1}} =  \max\{ N_{i} \mid i \in H \cup E' \}$.  Then at this point, if $i \in H \cup E'$, $a \geq \mathsf{M_{1}}$ and $b \geq \mathsf{N_{1}}$ then the element $\rho_{i}(f^{a}g^{b})$ if hyperbolic so long as $b \geq C_{i}(a)$. 

It remains to deal with $E - E'$; enumerate this set by $\{i_{1},\ldots,i_{\ell}\}$.  As condition~\eqref{item:elliptic 1} of Proposition~\ref{prop:elliptic} does not hold for $\rho_{i_{1}}(g_{k}) = \rho_{i_{1}}(g^{\mathsf{N_{0}} + k})$ acting on $X_{i_{1}}$, there is a subsequence $(g^{k_{n}}) \subseteq (g^{\mathsf{N_{0}}+k})$ such that $\rho_{i_{1}}(g^{k_{n}})A_{+}^{i_{1}} \to A_{-}^{i_{1}}$.  By iteratively passing to subsequences of $(g^{k_{n}})$, we can assume that for all $i \in E - E'$, either the sequence of points $(\rho_{i}(g^{k_{n}})A_{+}^{i}) \subseteq \bd X_{i}$ converges or is discrete.  

Notice that for $i \in E - E'$, the the final statement of Proposition~\ref{prop:elliptic} implies that $\rho_{i}(g)A_{-}^{i} \neq A_{-}^{i}$.  Coupling this with one of our earlier simplifications, we have that $\rho_{i}(g^{b})A_{-}^{i} \neq A_{-}^{i}$ for all $b \in \ZZ - \{0\}$.  Hence, there is a $K \in \NN$ such that for any $i \in E - E'$ the sequence $(g^{K+k_{n}})$ satisfies either: $\rho_{i}(g^{K+k_{n}})A_{+}^{i} \to p_{i} \neq A_{-}^{i}$ or $(\rho_{i}(g^{K+k_{n}})A_{+}^{i}) \subset \bd X_{i}$ is discrete.  Indeed, suppose $\rho_{i}(g^{k_{n}})A_{+}^{i} \to p_{i}$ (nothing new is being claimed in the discrete case).  If $p_{i} \notin \{ \rho_{i}(g^{k})A_{-}^{i} \}_{k \in \ZZ}$, then neither is $\rho_{i}(g^{K})p_{i}$ for any $K \in \NN$ so $\rho_{i}(g^{K+k_{n}})A_{+}^{i} \to \rho_{i}(g^{K})p_{i} \neq A_{-}^{i}$.  Else, if $p_{i} = \rho_{i}(g^{K_{i}})A_{-}^{i}$, then for $K \neq -K_{i}$ we have $\rho_{i}(g^{K+k_{n}})A_{+}^{i} \to \rho_{i}(g^{K + K_{i}})A_{-}^{i} \neq A_{-}^{i}$.  So by taking $K \in \NN$ to avoid the finitely many such $-K_{i}$ we see that the claim holds.  Without loss of generality, we can assume that $K \geq \mathsf{N_{1}}$.

Hence for each $i \in E - E'$, by Proposition~\ref{prop:elliptic}, there are disjoint neighborhoods $U_{+}^{i}, U_{-}^{i} \subset \overline{X}$ of $A_{+}^{i}$ and $A_{-}^{i}$ respectively and an $N_{i}$ so that for $n \geq N_{i}$ we have $\rho_{i}(g^{K+k_{n}})U_{+}^{i} \cap U_{-}^{i} = \emptyset$.  Applying Theorem~\ref{th:uniform hyperbolic} with the neighborhoods $U_{+}^{i}$ and $U_{-}^{i}$, there is a $M_{i}$ so that for $a \geq M_{i}$ the element $\rho_{i}(f^{a}g^{K+k_{n}})$ is hyperbolic if $n \geq N_{i}$.

Putting all of this together, let $\mathsf{M_{2}} = \max\{M_{i} \mid 1 \leq i \leq n-1 \}$ and let $\mathsf{N_{2}}= \max\{N_{i} \mid i \in E - E' \}$.  Thus for all $i = 1,\ldots,n-1$, if $a \geq \mathsf{M_{2}}$, and $n \geq \mathsf{N_{2}}$ then $\rho_{i}(f^{a}g^{K  + k_{n}})$ is hyperbolic so long as $K + k_{n} \geq C_{i}(a)$.  (Notice that $K + k_{n} \geq K \geq \mathsf{N_{1}}$ by assumption.) 

We now work with the action on the space $X_{n}$.  Interchanging the roles of $f$ and $g$ and arguing as above using Proposition~\ref{prop:elliptic} to the sequence of isometries $(\rho_{n}(f^{\ell}))$ we either obtain a subsequence $(f^{\ell_{m}}) \subseteq (f^{\ell})$ and constants $\mathsf{M_{3}}$ and $\mathsf{N_{3}}$ so that $\rho_{n}(f^{\ell_{m}}g^{b})$ is hyperbolic if $m \geq \mathsf{M_{3}}$ and $b \geq \mathsf{N_{3}}$.

Fix some $m \geq\mathsf{M_{3}}$ large enough so that $a = \ell_{m} \geq \mathsf{M_{2}}$ and let $\mathsf{C} = \max\{C_{i}(a) \mid 1 \leq i \leq n-1\}$.  Now for $n \geq \mathsf{N_{2}}$ large enough so that $b = K + k_{n} \geq \max\{ \mathsf{C},\mathsf{N_{3}} \}$ we have that $\rho_{i}(f^{a}g^{b})$ is hyperbolic for $i=1,\ldots,n$ as desired.
\end{proof} 


\section{Application to $\Out(F_{N})$}\label{sec:application}

Let $F_N$ be a free group of rank $N\ge2$. A \emph{free factor system} of $F_{N}$ is a finite collection $\calA=\{[A_{1}],[A_{2}],\ldots,[A_{K}]  \}$ of conjugacy classes of subgroups of $F_N$, such that there exist a free factorization 
\[
F_N=A_1\ast\cdots\ast A_{K}\ast B
\]
where $B$ is a (possibly trivial) subgroup, called a \emph{cofactor}.  There is a natural partial ordering among the free factor systems: $\calA\sqsubseteq\calB$ if for each $[A] \in \calA$ there is a $[B] \in \calB$ such that $gAg^{-1} < B$ for some $g\in F_N$.  In this case, we say that $\calA$ is \emph{contained} in $\calB$ or $\calB$ is an \emph{extension} of $\calA$.  

Recall, the \emph{reduced rank} of a subgroup $A < F_{N}$ is defined as \[\rerank(A) = \min\{0,\rank(A) - 1\}.\]  We extend this to a free factor systems by addition: \[\rerank(\calA) = \sum_{k=1}^{K} \rerank(A_{k})\] where $\calA=\{[A_{1}],[A_{2}],\ldots,[A_{K}] \}$.  An extension $\calA \sqsubseteq
\calB$ is called a \emph{multi-edge extension} if $\rerank(\calB) \geq \rerank(\calA) + 2$.

The group $\Out(F_{N})$ naturally acts on the set of free factor systems as follows.  Given $\calA=\{[A_{1}],[A_{2}],\ldots,[A_{K}]  \}$, and $\varphi\in \Out(F_N)$ choose a representative $\Phi\in\Aut(F_N)$ of  $\varphi$, a realization $F_N=A_1\ast\cdots\ast A_K\ast B$ of $\calA$ and define $\varphi (\calA)$ to be the free factor system $\{[\Phi(A_1)],\ldots, [\Phi(A_K)]\}$.  Given a free factor system $\calA$ consider the subgroup $\Out(F_N;\calA)$ of $\Out(F_N)$ that stabilizes the free factor system $\calA$. The group $\Out(F_N;\calA)$ is called the \emph{outer automorphism group of $F_N$ relative to $\calA$}, or the \emph{relative outer automorphism group} if the free factor system $\calA$ is clear from context.  If $\calA = \{[A]\}$, there is a well-defined restriction homomorphism $\Out(F_{N};\calA) \to \Out(A)$ we denote by $\varphi \mapsto \varphi\mid_{A}$~\cite[Fact~1.4]{HMpart1}.   

For a subgroup $\calH < \Out(F_N)$ and $\calH$--invariant free factor systems $\calF_1\sqsubseteq\calF_2$, we say that $\calH$ is \emph{irreducible with respect to the extension $\calF_1\sqsubseteq\calF_2$} if for any $\calH$--invariant free factor system $\calF$ such that $\calF_1\sqsubseteq \calF \sqsubseteq \calF_2$ it follows that either $\calF=\calF_1$ or $\calF=\calF_2$.  We sometimes say that $\calH$ is \emph{relatively irreducible} if the extension is clear from the context.  The subgroup $\calH$ is \emph{relatively fully irreducible} if each finite index subgroup $\calH' < \calH$ is relatively irreducible.  For an individual element $\varphi\in \Out(F_{N})$, we say that $\varphi$ is relatively (fully) irreducible if the cyclic subgroup $\langle\varphi\rangle$ is relatively (fully) irreducible. 

In close analogy with Ivanov's classification of subgroups of mapping class groups \cite{Iva}, in a series of papers Handel and Mosher gave a classification of finitely generated subgroups of $\Out(F_N)~$\cite{HMIntro, HMpart1, HMpart2, HMpart3, HMpart4}.

\begin{theorem}[{\cite[Theorem~D]{HMIntro}}]\label{HMMain} For each finitely generated subgroup $\calH < \IA_{N}(\ZZ/3) < \Out(F_N)$, each maximal $\calH$--invariant filtration by free factor systems $\emptyset = \calF_{0} \sqsubset \calF_{1} \sqsubset \cdots \sqsubset \calF_{m} = \{[ F_{N}] \}$,  and each $i = 1,...,m$ such that $\calF_{i-1}\sqsubset\calF_{i}$ is a multi-edge extension, there exists $\varphi\in\calH$ which is irreducible with respect to $\calF_{i-1} \sqsubset \calF_{i}$.
\end{theorem}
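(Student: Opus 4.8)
\textbf{Proof proposal for Theorem~\ref{HMMain}.}
The plan is as follows. First I would use maximality of the filtration to reduce the problem: since $\emptyset = \calF_{0} \sqsubset \cdots \sqsubset \calF_{m} = \{[F_{N}]\}$ is maximal, there is no $\calH$--invariant free factor system strictly between $\calF_{i-1}$ and $\calF_{i}$, so the \emph{subgroup} $\calH$ is already irreducible with respect to the extension $\calF_{i-1} \sqsubset \calF_{i}$. What remains is to produce a single \emph{element} of $\calH$ that is irreducible with respect to it. The hypothesis $\calH < \IA_{N}(\ZZ/3)$ enters precisely here: it guarantees that $\calH$ is torsion-free and, more importantly, that every element of $\calH$ is rotationless, so that the finitely many invariant free factor systems and attracting laminations of an element are unchanged on passing to powers — this is what makes a ``ping-pong on powers'' argument feasible.

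Second, I would build, from the free factor system $\calF_{i-1}$, a $\delta$--hyperbolic complex on which the relative stabilizer $\Out(F_{N};\calF_{i-1})$ acts — a relative free factor (or co-surface) graph $\fF(\calF_{i-1},\calF_{i})$ — arranged so that an element acts loxodromically exactly when it is irreducible with respect to $\calF_{i-1} \sqsubset \calF_{i}$. Because $\calH$ is relatively irreducible, the $\calH$--action on this complex has no bounded orbit, and one argues that it also fixes no point of the Gromov boundary; from there a ping-pong construction of exactly the type in Section~\ref{sec:actions} — choose $\varphi_{1},\varphi_{2} \in \calH$ whose laminations (equivalently, whose boundary fixed point pairs) are in general position, and pass to $\varphi = \varphi_{1}^{a}\varphi_{2}^{b}$ using Theorem~\ref{th:uniform hyperbolic} — yields a loxodromic element of $\calH$, which by the design of the complex is the desired relatively irreducible $\varphi$.

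The hard part will be exactly the interface between these two steps: identifying a $\delta$--hyperbolic complex whose loxodromic elements match irreducibility with respect to \emph{this specific} multi-edge extension, proving its hyperbolicity, and — the genuinely delicate point — ruling out that a relatively irreducible subgroup fixes a point of its boundary (a fixed boundary lamination need not obviously violate relative irreducibility, and this is where the multi-edge hypothesis must be exploited). In Handel and Mosher's treatment~\cite{HMIntro} this hyperbolic geometry is replaced throughout by direct work with completely split relative train track maps: EG strata, attracting lamination pairs, the weak attraction theorem, and a careful analysis — by induction on the complexity of the extension — of the laminations of a product $\varphi_{1}^{a}\varphi_{2}^{b}$ relative to $\calF_{i-1}$, ensuring it has a single dominant attracting--repelling lamination pair filling exactly the right free factor system. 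That bookkeeping is the technical heart of the argument, and I would expect any complete proof, geometric or combinatorial, to require it in some form.
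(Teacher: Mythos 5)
The paper does not prove Theorem~\ref{HMMain}; it is quoted verbatim as a black-box input, cited to Handel and Mosher~\cite[Theorem~D]{HMIntro}, and then used as a hypothesis in the proof of Theorem~\ref{th:application}. So there is no ``paper's own proof'' to compare your sketch against, and you yourself correctly note in your final paragraph that Handel and Mosher's actual argument runs through completely split relative train track maps, attracting lamination pairs, and the weak attraction theorem, not through hyperbolic geometry.

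Regarding the hyperbolic-geometric alternative you outline, there is a genuine gap in the bootstrap. Theorem~\ref{th:uniform hyperbolic} and the ping-pong of Section~\ref{sec:actions} take as \emph{input} a hyperbolic isometry $f$ and an auxiliary isometry $g$ satisfying a neighborhood condition, and produce another hyperbolic isometry $f^{m}g$; they do not manufacture a loxodromic element from scratch. Choosing ``$\varphi_{1},\varphi_{2}\in\calH$ whose laminations are in general position'' presupposes that $\calH$ contains elements with loxodromic-type dynamics on $\fF(B_{i};\calA_{i})$, which is essentially what Theorem~\ref{HMMain} asserts. What your plan really requires is to first show the $\calH$--action on the relative free factor complex is \emph{nonelementary} (unbounded orbits, no fixed point or fixed pair on $\partial\fF(B_{i};\calA_{i})$) and then invoke Gromov's classification of isometric actions on hyperbolic spaces to produce one loxodromic element; the ping-pong machinery of this paper is the wrong tool for that initial step. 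Ruling out a fixed boundary point or pair for all of $\calH$ from relative irreducibility alone is exactly where the multi-edge hypothesis and the structure of $\partial\fF(B_{i};\calA_{i})$ must be exploited --- this is the content of the Guirardel--Horbez approach mentioned in Remark~\ref{mainremark} --- and it is not a small point. As written, your sketch elides the hardest step and misattributes the production of the loxodromic element to Theorem~\ref{th:uniform hyperbolic}.
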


Here, $\IA_{N}(\ZZ/3)$ is the finite index subgroup of $\Out(F_N)$ which is the kernel of the natural surjection \[
p \from \Out(F_N)\to\ H^{1}(F_N,\mathbb{Z}/3)\cong GL(N,\mathbb{Z}/3).
\] For elements in $\IA_{N}(\ZZ/3)$, irreducibility is equivalent to full irreducibility hence in the above statement we can also conclude that $\varphi$ is fully irreducible~\cite[Theorem~B]{HMIntro}.

Handel and Mosher conjecture that there is a single $\varphi\in \calH$ which is (fully) irreducible for each multi-edge extension $\calF_{i-1} \sqsubset \calF_{i}$~\cite[Remark following Theorem~D]{HMIntro}. The goal of this section is to prove this conjecture.  Invoking theorems of Handel--Mosher and Horbez--Guirardel, this is (essentially) an immediate application of Theorem~\ref{th:constructing hyperbolic actions}.  We state the set-up and their theorems now.   

\begin{definition}\label{def:rff}
Let $\calA$ be a free factor system of $F_{N}$.  The \emph{complex of free factor systems of $F_{N}$ relative to $\calA$}, denoted $\fF(F_{N};\calA)$, is the geometric realization of the partial ordering $\sqsubseteq$ restricted to proper free factor systems that properly contain $\calA$. 
\end{definition}

If $\calA = \{[A_{1}],[A_{2}],\ldots,[A_{K}]  \}$ is a free factor system for $F_{N}$, its \emph{depth} is defined as:
\[ {\rm D}_{\fF}(\calA) = (2N-1) - \sum_{k=1}^{K} \bigl(2\rank(A_{k}) -1\bigr) \]
The free factor system $\calA$ is \emph{nonexceptional} if ${\rm D}_{\fF}(\calA) \geq 3$.  

\begin{theorem}[{\cite[Theorem~1.2]{un:HM-relative}}]\label{hyperbolicity}
For any nonexceptional free factor system $\calA$ of $F_{N}$, the complex $\fF(F_{N};\calA)$ is positive dimensional, connected and $\delta$--hyperbolic.
\end{theorem}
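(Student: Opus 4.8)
The plan is to adapt Bestvina--Feighn's proof of hyperbolicity of the (absolute) free factor complex $\fF(F_N)$ to the relative setting, the main new ingredient being versions of Outer space and of Stallings folding paths that remember the peripheral structure $\calA$. First I would set up the \emph{relative Outer space} $\calO(F_N;\calA)$: minimal simplicial $F_N$--trees in which every subgroup in $\calA$ is elliptic, equivalently the Guirardel--Levitt Outer space of the free product splitting of $F_N$ determined by $\calA$, with its (asymmetric) Lipschitz metric. There is a coarsely Lipschitz projection $\pi\from\calO(F_N;\calA)\to\fF(F_N;\calA)$ assigning to a marked graph of groups (a free factor system carried by) one of its proper nontrivial invariant subgraphs. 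Checking that $\pi$ is coarsely well-defined and coarsely surjective is routine, and along the way it yields connectivity of $\fF(F_N;\calA)$; that the target is positive-dimensional and connected rather than empty or discrete is exactly what the nonexceptionality hypothesis ${\rm D}_{\fF}(\calA)\ge 3$ guarantees (it forces the existence of a chain $\calA\sqsubset\calB_1\sqsubset\calB_2\sqsubset\{[F_N]\}$ of proper free factor systems), so the first two assertions of the theorem fall out of the combinatorics of the partial order.

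For hyperbolicity itself I would verify a Masur--Minsky / Bowditch ``guessing geodesics'' criterion, taking as the family of candidate quasigeodesics the images under $\pi$ of folding paths. This requires developing \emph{relative folding paths} in $\calO(F_N;\calA)$ --- Stallings folds performed on graphs of groups keeping the peripheral vertex groups elliptic --- and establishing the standard package: between any two points there is such a path, folding paths are reparametrized quasigeodesics for the Lipschitz metric, and they enjoy the ``greedy/canonical'' features Bestvina--Feighn exploit. The core estimate to port is the statement that a free factor system with bounded $\pi$--image along a folding path $\gamma$ is ``almost carried'' by a single tree on $\gamma$; granting this, the three axioms (the candidate paths are uniform unparametrized quasigeodesics, coarsely independent of the chosen representatives in $\calO(F_N;\calA)$, and satisfy the thin-triangle condition) follow by the same formal arguments as in the absolute case, and one concludes that $\fF(F_N;\calA)$ is $\delta$--hyperbolic.

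An alternative --- which I would run in parallel, and which tends to require fewer estimates --- is to first establish hyperbolicity of the \emph{relative free splitting complex} $\fS(F_N;\calA)$, the relative analogue of the Handel--Mosher / Hilion--Horbez theorem, and then deduce hyperbolicity of $\fF(F_N;\calA)$ by viewing it as an electrification of $\fS(F_N;\calA)$: there is a coarsely Lipschitz, coarsely surjective map $\fS(F_N;\calA)\to\fF(F_N;\calA)$ realized by coning off the ``stars'' of free factor systems, and by a coning-off lemma in the spirit of Kapovich--Rafi, hyperbolicity passes to the quotient once those subsets are shown to be uniformly quasiconvex in $\fS(F_N;\calA)$.

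In either route the real obstacle is the same: transporting the Bestvina--Feighn folding-path machinery from graphs to graphs of groups. Insisting that the peripheral subgroups in $\calA$ remain elliptic along every folding sequence forces one to work with non-free $F_N$--trees and to keep track of vertex groups, so the combinatorial lemmas bounding ``how far a free factor can travel along a folding path'' must be re-proved in this context; the nonexceptionality hypothesis has to be invoked at each inductive step to rule out the low-complexity degenerations in which $\fF(F_N;\calA)$ would fail to be connected and positive-dimensional. Once those relative estimates are in hand, verifying the guessing-geodesics axioms is essentially formal.
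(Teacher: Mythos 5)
This theorem is not proved in the paper you are reading: it is imported verbatim from Handel--Mosher's companion work (the citation \cite{un:HM-relative}) and used as a black box in Section~\ref{sec:application}. So there is no ``paper's own proof'' here for your sketch to be measured against, and nothing you write would be filling a gap in this paper --- the authors deliberately outsource this input. What you can be measured against is the argument in the cited source, and your primary route does match it in outline: Handel and Mosher work in a relative Outer space whose points are graphs of groups (equivalently, free splittings rel $\calA$), define and control folding paths that keep the peripheral subgroups elliptic, and verify a Masur--Minsky--style thin-triangles criterion with projected folding paths as the preferred family, following the Bestvina--Feighn template for the absolute case. The nonexceptionality hypothesis enters exactly where you place it, ruling out the sporadic low-complexity cases where the complex degenerates.

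Two cautions. First, calling the remaining steps ``routine'' or ``essentially formal'' undersells the technical weight: the core carrying/progress estimate for free factor systems along relative folding paths is the bulk of the cited paper, and the bookkeeping for vertex groups is genuinely more delicate than in the free case (illegal turns and gates must be defined at vertices with nontrivial stabilizer, and one must control how peripheral subgroups sit inside collapsed subgraphs). Your sketch names the right obstacle but does not resolve it, so as written this is a plan, not a proof. Second, your alternative route through the relative free splitting complex plus a Kapovich--Rafi electrification lemma is also viable and has appeared in the literature (Guirardel--Horbez take a route in this spirit, cf.\ \cite{un:GH}), but again the quasiconvexity input it requires is not cheaper than the Bestvina--Feighn estimates --- it merely relocates where the work happens. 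If your goal were to actually supply the proof rather than cite it, you would need to carry out one of these programs in full; if your goal is to understand why the authors may safely cite the result, your outline is adequate.
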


Although the group $\Out(F_N)$ does not act on $\fF(F_{N};\calA)$, the natural subgroup $\Out(F_N;\calA)$ associated to the free factor system $\calA$ acts on $\fF(F_{N};\calA)$ by simplicial isometries. In a companion paper Handel and Mosher characterize the elements of $\Out(F_N;\calA)$ that act as a hyperbolic isometry of $\fF(F_{N};\calA)$:

\begin{theorem}[\cite{un:HM-relative2}]\label{loxod} For any nonexceptional free factor system $\calA$ of $F_N$, $\varphi\in\Out(F_N;\calA)$ acts as a hyperbolic isometry on $\fF(F_{N};\calA)$ if and only if $\varphi$ is fully irreducible with respect to $\calA \sqsubset \{[F_{N}]\}$. 
\end{theorem}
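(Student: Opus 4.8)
The plan is to prove the two implications separately: ``hyperbolic $\Rightarrow$ relatively fully irreducible'' is a short orbit argument, while ``relatively fully irreducible $\Rightarrow$ hyperbolic'' is the substantive direction and follows the Bestvina--Feighn template for the (absolute) free factor complex, adapted to the relative setting. Throughout write $X = \fF(F_{N};\calA)$, which is $\delta$--hyperbolic by Theorem~\ref{hyperbolicity}. Recall that an isometry of a $\delta$--hyperbolic space is hyperbolic (in the sense of Definition~\ref{def:isometries}) if and only if some, equivalently any, orbit $n \mapsto \rho(\varphi^{n})v$ is a quasi-geodesic; for the converse direction this is the quantity we will estimate, passing freely to powers (as in the proof of Theorem~\ref{th:uniform hyperbolic}, $\rho(\varphi)$ is hyperbolic iff $\rho(\varphi^{k})$ is).

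\textbf{Hyperbolic $\Rightarrow$ relatively fully irreducible.} I argue the contrapositive. If $\varphi$ is not relatively fully irreducible with respect to $\calA \sqsubset \{[F_{N}]\}$, then some finite-index subgroup of $\langle \varphi \rangle$ — necessarily $\langle \varphi^{k} \rangle$ for some $k \geq 1$ — preserves a free factor system $\calF$ with $\calA \sqsubset \calF \sqsubset \{[F_{N}]\}$, both containments proper. By Definition~\ref{def:rff} such an $\calF$ is precisely a vertex of $X$, and since $\Out(F_{N};\calA)$ acts simplicially, $\rho(\varphi^{k})$ fixes this vertex; hence $\rho(\varphi^{k})$ has a single-point orbit, so it is not hyperbolic, and therefore neither is $\rho(\varphi)$. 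I also record for the other direction that Theorem~\ref{hyperbolicity} forces $X$ to be nonempty, so the trivial subgroup is never relatively irreducible; hence a relatively fully irreducible $\varphi$ automatically has infinite order.

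\textbf{Relatively fully irreducible $\Rightarrow$ hyperbolic: the ingredients.} First I would set up the relative Outer space $\calO(F_{N};\calA)$ of minimal $F_{N}$--trees with peripheral structure $\calA$, together with a coarsely well-defined, coarsely $\Out(F_{N};\calA)$--equivariant, coarsely Lipschitz projection $\pi \from \calO(F_{N};\calA) \to X$ sending a marked graph to a free factor system strictly between $\calA$ and $\{[F_{N}]\}$ carried by a proper subgraph. Next, after replacing $\varphi$ by a rotationless power, I would take a relative completely split train track (CT) representative $f \from G \to G$ with the filtration of $G$ realizing $\calA$ below the top stratum $H_{r}$; relative full irreducibility with respect to $\calA \sqsubset \{[F_{N}]\}$ forces $H_{r}$ to be exponentially growing and forces its attracting lamination $\Lambda^{+}_{\varphi}$ to be filling relative to $\calA$ — carried by no $\calF$ with $\calA \sqsubset \calF \sqsubset \{[F_{N}]\}$ — and dually for $\varphi^{-1}$ and $\Lambda^{-}_{\varphi}$. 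The CT yields a periodic folding line $\gamma$ in $\calO(F_{N};\calA)$ along which $\varphi$ acts by translation. Finally, I would show that $\pi \circ \gamma$ is a uniform unparametrised quasi-geodesic in $X$; granting this, the fact that $\varphi$ translates along $\gamma$ together with coarse Lipschitzness and equivariance of $\pi$ shows $n \mapsto \rho(\varphi^{n})v$ is a quasi-geodesic, hence $\rho(\varphi)$ is hyperbolic. (Equivalently, $\Lambda^{+}_{\varphi}$ and $\Lambda^{-}_{\varphi}$ determine distinct points of $\partial X$ toward which the forward and backward orbits converge, producing the North--South dynamics of Proposition~\ref{prop:ns}.)

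\textbf{Main obstacle.} The crux is the last step: folding lines issuing from the CT axis project to uniform quasi-geodesics in the \emph{relative} free factor complex. This needs the relative analogue of Bestvina--Feighn's projection and ``progress'' estimates — because $\Lambda^{+}_{\varphi}$ is filling relative to $\calA$, the set of graphs along $\gamma$ that fail to ``see'' $\Lambda^{+}_{\varphi}$ at a definite scale has uniformly bounded $\pi$--image in $X$, so $\pi \circ \gamma$ makes definite coarse progress and cannot backtrack more than a bounded amount. Everything else — the easy direction, the classification of isometries, the reduction to a quasi-geodesic orbit, passage to powers, and the existence of relative CTs — is routine given the cited results; it is this quasi-geodesic estimate, carried out in~\cite{un:HM-relative2}, that does the real work.
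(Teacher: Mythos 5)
The paper does not prove Theorem~\ref{loxod}: it is quoted directly from Handel--Mosher's companion paper \cite{un:HM-relative2} (note the citation in the theorem statement itself), and Remark~\ref{mainremark} records further proofs due to Guirardel--Horbez \cite{un:GH} and, for nontrivial cofactor, Gupta \cite{un:Gupta}. There is therefore no in-paper proof to compare your argument against; the theorem is used here as a black box. Judged on its own terms, your easy direction is complete and correct: a $\langle\varphi^{k}\rangle$--invariant proper free factor system properly containing $\calA$ is by Definition~\ref{def:rff} exactly a vertex of $\fF(F_{N};\calA)$ fixed by $\rho(\varphi^{k})$, so $\rho(\varphi^{k})$, and hence $\rho(\varphi)$, is not hyperbolic; and your side remark that nonemptiness of the complex (via Theorem~\ref{hyperbolicity}) rules out finite-order relatively fully irreducible elements is also correct. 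Your hard direction is, as you acknowledge, an outline rather than a proof: it correctly names the expected ingredients in the Handel--Mosher style (relative Outer space, a coarse $\Out(F_{N};\calA)$--equivariant projection to $X$, relative CT representatives, attracting laminations that fill relative to $\calA$, and a Bestvina--Feighn-type progress estimate along a folding line), and it correctly isolates the uniform quasi-geodesic projection estimate as the genuine content of \cite{un:HM-relative2}. That estimate is the entire theorem in this direction; deferring it to the cited reference is acceptable for the role Theorem~\ref{loxod} plays in this paper, but one should not mistake the sketch for a self-contained argument.
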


\begin{remark} \label{mainremark} An alternative proof of Theorem \ref{loxod} is given by Guirardel and Horbez in \cite{un:GH} using the description of the boundary of the relative free factor complex.  Further, with a slight modification of the definition of the relative free factor complex, both Handel and Mosher and Guirardel and Horbez can additionally prove that the theorem holds for the only remaining multi-edge configuration which is when $\calA=\{[A_1],[A_2],[A_3]\}$ and $F_N=A_1\ast A_2\ast A_3$. Yet another proof of Theorem \ref{loxod} when the cofactor is non-trivial is given by Radhika Gupta in \cite{un:Gupta} using dynamics on relative outer space and relative currents. 
\end{remark}

We are now ready to prove our application:

\begin{theorem}\label{th:application}
For each finitely generated subgroup $\calH < \IA_{N}(\ZZ/3)<\Out(F_N)$ and each maximal $\calH$--invariant filtration by free factor systems $\emptyset = \calF_{0} \sqsubset \calF_{1} \sqsubset \cdots \sqsubset \calF_{m} = \{[ F_{N}] \}$, there is an element $\varphi \in \calH$ such that for each $i = 1,\ldots,m$ such that $\calF_{i-1} \sqsubset \calF_{i}$ is a multi-edge extension, $\varphi$ is irreducible with respect to $\calF_{i-1} \sqsubset \calF_{i}$.
\end{theorem}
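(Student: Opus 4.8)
The plan is to deduce this from Theorem~\ref{th:constructing hyperbolic actions} applied with $G = \calH$, taking the $\delta$--hyperbolic spaces to be the relative free factor complexes that Handel--Mosher attach to the multi-edge extensions of the filtration. First I would enumerate the indices $i$ for which $\calF_{i-1}\sqsubset\calF_{i}$ is a multi-edge extension; there are only finitely many, so this matches the hypothesis of Theorem~\ref{th:constructing hyperbolic actions}. For each such $i$, following \cite{HMIntro} there is a distinguished free factor $[A_{i}]\in\calF_{i}$ such that the induced free factor system $\calF_{i-1}|_{A_{i}}$ of $A_{i}$ is proper, and such that restriction to $[A_{i}]$ gives a bijection between the $\calH$--invariant free factor systems lying between $\calF_{i-1}$ and $\calF_{i}$ and the $\calH|_{A_{i}}$--invariant free factor systems of $A_{i}$ lying between $\calF_{i-1}|_{A_{i}}$ and $\{[A_{i}]\}$; because the pair $(\calF_{i-1},\calF_{i})$ is $\calH$--invariant and determines $[A_{i}]$, the subgroup $\calH$ preserves $[A_{i}]$, so restriction (\cite[Fact~1.4]{HMpart1}) yields a homomorphism $\calH\to\Out(A_{i};\calF_{i-1}|_{A_{i}})$, $\varphi\mapsto\varphi|_{A_{i}}$. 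I would take $X_{i}=\fF(A_{i};\calF_{i-1}|_{A_{i}})$, with $\rho_{i}\from\calH\to\Isom(X_{i})$ the resulting simplicial action. A short reduced-rank computation shows $\calF_{i-1}|_{A_{i}}$ is nonexceptional except in the single configuration $A_{i}=A_{i}'\ast A_{i}''\ast A_{i}'''$ with $\calF_{i-1}|_{A_{i}}=\{[A_{i}'],[A_{i}''],[A_{i}''']\}$; for nonexceptional $\calF_{i-1}|_{A_{i}}$ the space $X_{i}$ is $\delta$--hyperbolic by Theorem~\ref{hyperbolicity}, and in the exceptional case I would instead use the modified relative free factor complex of Remark~\ref{mainremark}, which is again $\delta$--hyperbolic and carries the analogue of Theorem~\ref{loxod}.

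The next step is to record a dictionary: for $\varphi\in\calH$, the isometry $\rho_{i}(\varphi)$ is hyperbolic if and only if $\varphi|_{A_{i}}$ is fully irreducible with respect to $\calF_{i-1}|_{A_{i}}\sqsubset\{[A_{i}]\}$, by Theorem~\ref{loxod}; this is equivalent, via the correspondence above, to $\varphi$ being fully irreducible with respect to $\calF_{i-1}\sqsubset\calF_{i}$ (one uses here that $\varphi\in\IA_{N}(\ZZ/3)$ acts trivially on $H_{1}(F_{N};\ZZ/3)$, hence $\varphi|_{A_{i}}$ acts trivially on the summand $H_{1}(A_{i};\ZZ/3)$ and so lies in the corresponding $\IA$ subgroup); and finally, since $\varphi\in\IA_{N}(\ZZ/3)$, full irreducibility with respect to $\calF_{i-1}\sqsubset\calF_{i}$ is equivalent to irreducibility with respect to $\calF_{i-1}\sqsubset\calF_{i}$ by \cite[Theorem~B]{HMIntro}. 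In summary, $\rho_{i}(\varphi)$ is hyperbolic $\iff$ $\varphi$ is irreducible with respect to $\calF_{i-1}\sqsubset\calF_{i}$.

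With the dictionary in place I would verify the two hypotheses of Theorem~\ref{th:constructing hyperbolic actions}. Hypothesis~(1) is immediate: Theorem~\ref{HMMain} supplies some $\varphi_{i}\in\calH$ irreducible with respect to $\calF_{i-1}\sqsubset\calF_{i}$, so $\rho_{i}(\varphi_{i})$ is hyperbolic. For hypothesis~(2), let $g\in\calH$ and suppose $\rho_{i}(g)$ is not hyperbolic; by the dictionary $g$ is not irreducible, equivalently not fully irreducible, with respect to $\calF_{i-1}\sqsubset\calF_{i}$, so $g|_{A_{i}}$ is not fully irreducible with respect to $\calF_{i-1}|_{A_{i}}\sqsubset\{[A_{i}]\}$, and hence for some $k\ge 1$ the cyclic subgroup $\langle g^{k}|_{A_{i}}\rangle$ is reducible. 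A $g^{k}|_{A_{i}}$--invariant free factor system strictly between $\calF_{i-1}|_{A_{i}}$ and $\{[A_{i}]\}$ is then a vertex of $X_{i}$ fixed by $\rho_{i}(g)^{k}$, so $\rho_{i}(g)$ has a periodic orbit. Theorem~\ref{th:constructing hyperbolic actions} now produces $\varphi\in\calH$ with $\rho_{i}(\varphi)$ hyperbolic for every multi-edge index $i$, and the dictionary turns this into the assertion that $\varphi$ is irreducible with respect to each multi-edge extension $\calF_{i-1}\sqsubset\calF_{i}$.

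The only genuinely new analytic input is Theorem~\ref{th:constructing hyperbolic actions} itself; everything else is bookkeeping inside the Handel--Mosher framework. Accordingly I expect the main obstacle to be pinning down that bookkeeping precisely --- in particular the restriction correspondence between intermediate $\calH$--invariant free factor systems upstairs and downstairs, its compatibility with (full) irreducibility, and the clean treatment of the exceptional $A_{i}'\ast A_{i}''\ast A_{i}'''$ configuration through the modified complex --- all of which is contained in \cite{HMIntro,un:HM-relative,un:HM-relative2,un:GH} but needs to be extracted and cited carefully.
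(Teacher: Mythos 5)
Your proposal is correct and takes essentially the same approach as the paper: both isolate the unique new component $[B_i]$ of each multi-edge extension, pass to the restriction action on the relative free factor complex $\fF(B_i;\calA_i)$, invoke Theorems~\ref{HMMain}, \ref{hyperbolicity} and \ref{loxod} to verify the two hypotheses of Theorem~\ref{th:constructing hyperbolic actions}, and conclude. You spell out a couple of points the paper leaves implicit (the exceptional $A'\ast A''\ast A'''$ configuration handled via Remark~\ref{mainremark}, and the $\IA$-membership of the restriction needed for the irreducible/fully irreducible equivalence), but the substance is identical.
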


\begin{proof}  
Let $I$ be the subset of indices $i$ such that $\calF_{i-1} \sqsubset \calF_{i}$ is a multi-edge extension.

Given $i \in I$, since $\calH < \IA_{N}(\ZZ/3)$, each component of $\calF_{i-1}$ and $\calF_i$ is $\calH$--invariant~\cite[Lemma~4.2]{HMpart2}.  Moreover, by the argument at the beginning of Section~2.1 in~\cite{HMpart4}, since $\calH$ is irreducible with respect to $\calF_{i-1} \sqsubset \calF_{i}$ (this follows from maximality of the filtration) there is precisely one component $[B_{i}] \in \calF_{i}$ that is not a component of $\calF_{i-1}$.  Let $\widehat{\calA}_{i}$ be the maximal subset of $\calF_{i-1}$ such that $\widehat{\calA}_{i} \sqsubset \{[B_{i}]\}$.  Notice that this extension is again multi-edge, indeed $\rerank(B_{i}) - \rerank(\widehat{\calA}_{i}) = \rerank(\calF_{i}) - \rerank(\calF_{i-1})$.  The system $\widehat{\calA}_{i}$ can be represented by $\{ [A_{i,1}],\ldots, [A_{i,K_{i}}] \}$ where $A_{i,k} < B_{i}$ for each $k$.  Let $\calA_{i}$ be the free factor system in the subgroup $B_{i}$ consisting of the conjugacy classes in $B_{i}$ of the subgroups $A_{i,k}$.  Then a given $\varphi \in \calH$ is irreducible with respect to $\widehat{\calA}_{i} \sqsubset \{[B_{i}]\}$, equivalently $\calF_{i-1} \sqsubset \calF_{i}$ as the remaining components are the same, if and only if the restriction $\varphi\mid_{B_{i}} \in \Out(B_{i};\calA_{i})$ is irreducible relative to $\calA_{i}$.

For $i \in I$, let $X_{i} = \fF(B_{i};\calA_{i})$ and consider the action homomorphism $\rho_{i} \from \calH \to \Isom(X_{i})$ defined by $\rho_{i}(\varphi) = \varphi\mid_{B_{i}}$.  These spaces are $\delta$--hyperbolic for some $\delta$ by Theorem~\ref{hyperbolicity} and by the above discussion and Theorem~\ref{loxod}, $\rho_{i}(\varphi)$ is a hyperbolic isometry if $\varphi \in \calH$ is irreducible with respect to $\calF_{i-1} \sqsubset \calF_{i}$.  If $\rho_{i}(\varphi)$ is not irreducible with respect to $\calF_{i-1} \sqsubset \calF_{i}$, then $\rho_{i}(\varphi)$ fixes a point in $X_{i}$.  By Theorem~\ref{HMMain}, for each $i \in I$, there exist some $\varphi_{i} \in\calH$ that is irreducible with respect to $\calF_{i-1} \sqsubset \calF_{i}$ and hence $\rho_{i}(\varphi_{i})$ is a hyperbolic isometry.  

We are now in the model situation of Theorem~\ref{th:constructing hyperbolic actions}.  We conclude that there is a $\varphi \in \calH$ such that $\rho_{i}(\varphi)$ is a hyperbolic isometry for all $i \in I$.  By the above discussion, this means that $\varphi$ is (fully) irreducible with respect to $\calF_{i-1} \sqsubset \calF_{i}$ for each $i \in I$ as desired.
\end{proof}


\bibliography{bib}
\bibliographystyle{acm}

\end{document}